\theoremstyle{remark}{

\newtheorem{Ex}{{\rm Example}}
\newtheorem{Rem}{{\rm Remark}}

}
\newtheorem{Cor}{Corollary}
\newtheorem{Prop}{Proposition}
\newtheorem{Thm}{Theorem}
\newtheorem{MainThm}{Main Theorem}
\newtheorem{Fact}{Fact}
\begin{document}
\title[
Fold maps on higher dimensional manifolds and Massey products]{Special generic maps and fold maps and information on triple Massey products of higher dimensional differentiable manifolds}
\author{Naoki Kitazawa}
\keywords{Singularities of differentiable maps; fold maps. Cohomology classes: triple Massey products. Higher dimensional closed and simply-connected manifolds.
\indent {\it \textup{2020} Mathematics Subject Classification}: Primary~57R45. Secondary~57R19.}
\address{Institute of Mathematics for Industry, Kyushu University, 744 Motooka, Nishi-ku Fukuoka 819-0395, Japan\\
 TEL (Office): +81-92-802-4402 \\
 FAX (Office): +81-92-802-4405 \\
}
\email{n-kitazawa@imi.kyushu-u.ac.jp}
\urladdr{https://naokikitazawa.github.io/NaokiKitazawa.html}
\maketitle
\begin{abstract}

Closed (and simply-connected) manifolds whose dimensions are greater than $4$ are central geometric objects in classical algebraic topology and differential topology. They have been classified via algebraic and abstract objects. On the other hand,
It is difficult to understand them in geometric and constructive ways.

In the present paper, we show such studies via explicit {\it fold} maps, higher dimensional versions of Morse functions, including {\it special generic} maps, generalized versions of Morse functions with exactly two singular points on homotopy spheres. 
The author captured information of the topologies and the differentiable structures of closed (and simply-connected) manifolds which are not so complicated with respect to homotopy and cohomology rings of more general manifolds via construction of these maps previously. In the present paper, as a more precise work, we capture so-called {\it {\rm (}triple{\rm )} Massey products} in this way.

\end{abstract}


\maketitle
\section{Introduction --what will be presented in the present paper, terminologies and notation--.}
\label{sec:1}
Closed (and simply-connected) manifolds whose dimensions are larger than $4$ are central geometric objects in classical algebraic topology and differential topology. They have been classified via algebraic and abstract objects in 1950s--1970s. 
It is difficult to understand them in geometric and constructive ways.  
This paper presents related studies via Morse functions and {\it fold} maps, which are higher dimensional versions of Morse functions, including {\it special generic} maps, generalized versions of Morse functions with exactly two singular points on homotopy spheres.
The author captured information of the topologies and the differentiable structures of these manifolds which are not so complicated with respect to homotopy and cohomology rings of more general manifolds via construction of these maps.
This paper presents a new result on this work by capturing {\it triple Massey products} as more precise information on cohomology classes via construction of fold maps.
\subsection{Notation on differentiable maps and bundles}
\label{subsec:1.1}
Throughout this paper, manifolds and maps between manifolds are fundamental objects and they are smooth and of class $C^{\infty}$. Diffeomorphisms on manifolds are always smooth. The {\it diffeomorphism group} of a manifold is the group of all diffeomorphisms on the manifold. For bundles whose fibers are manifolds, the structure groups are subgroups of the diffeomorphism groups or the bundles are {\it smooth} 
unless otherwise stated. Note that in some scenes, we consider {\rm PL} bundles or bundles whose fibers are polyhedra and structure groups are PL homeomorphisms. 

A {\it linear} bundle is a smooth bundle whose fiber is regarded as a unit sphere or a unit disc in a Euclidean space and whose structure group acts linearly in a canonical way on the fiber.

A {\it singular} point $p \in X$ of a differentiable map $c:X \rightarrow Y$ is a point at which the rank of the differential $dc$ of the map is smaller than the dimension of the target manifold: ${\rm rank} \quad {dc}_p < \dim Y$ holds where ${dc}_p$ denotes the differential of $c$ at $p$. We call the set $S(c)$ of all singular points the {\it singular set} of $c$. We call $c(S(c))$ the {\it singular value set} of $c$. We call $Y-c(S(c))$ the {\it regular value set} of $c$. A {\it singular {\rm (}regular{\rm )} value} is a point in the singular (resp. regular) value set of $c$.

For $x \in {\mathbb{R}}^k$, $||x||$ denotes the distance between $x$ and the origin $0 \in {\mathbb{R}}^k$ where the metric is the Euclidean metric.

\subsection{Fold maps.}
\label{subsec:1.2}
Let $m>n \geq 1$ be integers. A smooth map from an $m$-dimensional smooth manifold with no boundary into an $n$-dimensional smooth manifold with no boundary is said to be a {\it fold} map if at each singular point $p$, the map is represented as
$$(x_1, \cdots, x_m) \mapsto (x_1,\cdots,x_{n-1},\sum_{k=n}^{m-i}{x_k}^2-\sum_{k=m-i+1}^{m}{x_k}^2)$$
 for suitable coordinates and an integer $0 \leq i(p) \leq \frac{m-n+1}{2}$. For singular point $p$, $i(p)$ is unique : it is called the {\it index} of $p$. The set consisting of all singular points of a fixed index of the map is a closed submanifold of dimension $n-1$ with no boundary of the $m$-dimensional manifold. The restriction map to the singular set is an immersion.

\subsection{Special generic maps and what special generic maps and explicit fold maps tell about the topologies and the differentiable structures of the manifolds.}
A {\it special generic} map is a fold map such that the index of each singular point is $0$. A Morse function on a closed manifold with exactly two singular points, characterizing a sphere topologically (except $4$-dimensional cases) as the Reeb's theorem \cite{reeb} states, and the canonical projection of an unit sphere are simplest special generic maps.
 It is an interesting fact that special generic maps restrict the topologies and the differentiable structures of the manifolds admitting them strongly in considerable cases. As an observation for simplest cases, homotopy spheres of dimension $m>3$ do not admit special generic maps into ${\mathbb{R}}^{m-3}$, ${\mathbb{R}}^{m-2}$ and ${\mathbb{R}}^{m-1}$. For integers $m>n \geq 1$, on an $m$-dimensional manifold represented as a connected sum of manifolds represented as products of two standard spheres such that the dimension of either of the two sphere for each manifold is smaller than $n$, we can obtain a special generic map into ${\mathbb{R}}^n$. On the other hand, for example, it is known that $4$-dimensional manifolds homeomorphic to these manifolds and not diffeomorphic to them exist and that they admit fold maps and do not admit special generic maps into ${\mathbb{R}}^3$. For these studies, see \cite{saeki}, \cite{saeki2}, \cite{saekisakuma}, \cite{saekisakuma2} and \cite{wrazidlo} for example. 
Since pioneering studies by Thom (\cite{thom}) and Whitney (\cite{whitney}) on smooth maps on manifolds whose dimensions are larger thanor equal to $2$ into the plane, fold maps have been important tools as the studies on special generic maps show in studying geometric properties of manifolds in the branch of the singularity theory of differentiable maps and application of the theory to geometry of manifolds. These results imply that higher dimensional versions of Morse functions are strong tools in algebraic topology and differential topology of manifolds while Morse functions were well-known to be strong already in 1950s. Motivated by and related to these studies, the author obtained several results.
\begin{Thm}[\cite{kitazawa}, \cite{kitazawa2} and so on.]
\label{thm:1}
Every $7$-dimensional homotopy sphere $M$ admits a fold map $f:M \rightarrow {\mathbb{R}}^4$ satisfying the following properties. 

\begin{enumerate}
\item $f {\mid}_{S(f)}$ is embedding and $f(S(f))=\{x \in {\mathbb{R}}^4 \mid ||x||=1,2,3\}$.
\item The index of each singular point is always $0$ or $1$.
\item For each connected component of the regular value set of $f$, the preimage of a regular value in each connected component is, empty, diffeomorphic to $S^3$, diffeomorphic to $S^3 \sqcup S^3$ and diffeomorphic to $S^3 \sqcup S^3 \sqcup S^3$, respectively.
\end{enumerate}
\end{Thm}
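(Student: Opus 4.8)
The plan is to produce $f$ as a \emph{round fold map} --- one whose singular value set is the union of the three concentric $3$-spheres $\{||x||=1\}$, $\{||x||=2\}$, $\{||x||=3\}$ --- by stacking three elementary pieces radially: an index-$0$ ``cap'' over the outermost sphere and two index-$1$ ``fibre-splitting'' pieces over the two inner spheres, the gluings of the pieces encoding the smooth structure of $M$.

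First I would fix the outermost piece. Over $\{||x||\ge 2\}$ I put the standard special generic cap: an index-$0$ fold along $\{||x||=3\}$, with empty preimage for $||x||>3$ and regular fibre $S^3$ over $\{2<||x||<3\}$. By the local structure of special generic maps near an embedded component of the singular value set --- the preimage of a collar of the singular value sphere is the disc bundle of the normal bundle of the singular $3$-sphere, which is trivial since $\pi_2(\mathrm{SO}(4))=0$ --- one gets $f^{-1}(\{2\le||x||\le 3\})\cong S^3\times D^4$, with inner boundary $S^3\times S^3$ lying over $\{||x||=2\}$. This piece carries index $0$ only and no freedom.

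Next, over $\{||x||=2\}$ and $\{||x||=1\}$ I would insert index-$1$ folds in their ``splitting'' orientation. An index-$1$ fold of a map into $\mathbb R^4$ has a normal form with exactly one negative square, and in one of its two radial orientations its effect on the regular fibre is to replace a component $S^3$ by $S^3\sqcup S^3$: locally one deletes the tubular neighbourhood of a separating $2$-sphere in the $S^3$ and caps the two resulting pieces with $3$-balls. Carrying this out along all of $\{||x||=2\}\cong S^3$ turns the fibre into $S^3\sqcup S^3$ over $\{1<||x||<2\}$, and a second such fold along $\{||x||=1\}$, splitting one of the two components, turns it into $S^3\sqcup S^3\sqcup S^3$ over $\{||x||<1\}$. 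Thus properties $(1)$, $(2)$, $(3)$ hold by construction for \emph{any} admissible choice of splitting data; what is left is to glue the local fold models smoothly and to control the diffeomorphism type of the resulting closed $7$-manifold.

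The remaining freedom sits in the two splitting folds: splitting over a base $3$-sphere $\{||x||=j\}$ amounts to prescribing an $S^3$-parametrised family of separating $2$-spheres together with capping data, i.e.\ a clutching element in $\pi_3$ of a diffeomorphism group containing $\pi_3(\mathrm{SO}(4))\cong\mathbb Z\oplus\mathbb Z$ --- exactly the datum behind Milnor's exotic $S^3$-bundles over $S^4$. I would then impose on the two clutching elements the codimension-one condition (analogous to the Euler-number-$\pm1$ condition for such bundles) that makes the glued manifold a homotopy sphere, show that each splitting fold contributes a well-defined element of $\Theta_7\cong\mathbb Z/28$ --- computed, say, through the Eells--Kuiper $\mu$-invariant of the natural $8$-dimensional coboundary --- with the class of $M$ being (essentially) the sum of the two contributions, and check that the two contributions together already exhaust $\Theta_7$, so that every homotopy $7$-sphere occurs (three concentric spheres is a natural minimum here, since the component over the largest sphere must be an index-$0$ cap, the preimage outside it being empty, and so carries no freedom). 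I expect the genuine difficulty to be precisely this last point in combination with the smooth matching: fitting the index-$0$ cap and the two index-$1$ splitting folds into one smooth fold map with singular set an embedded disjoint union of three $3$-spheres mapped onto the three radii, with all indices in $\{0,1\}$, while \emph{simultaneously} tuning the two clutching parameters so that the closed manifold produced is the prescribed homotopy sphere; the individual ingredients (the structure theory of special generic and round fold maps, the normal forms of index-$0$ and index-$1$ folds, and the realisation of $\Theta_7$ by $S^3$-bundles over $S^4$ with additivity of the Eells--Kuiper invariant under connected sum) are standard, and the work lies in making them fit the rigid three-sphere round picture and in verifying $(1)$--$(3)$ on the nose.
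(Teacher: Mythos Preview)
The paper does not prove this theorem here; it is quoted from the author's earlier work \cite{kitazawa}, \cite{kitazawa2}, \cite{kitazawa3} on round fold maps, so there is no in-paper argument to compare against. Your plan is exactly the one used in those references: build a \emph{round} fold map radially, with an index-$0$ cap over the outermost sphere and index-$1$ ``splitting'' folds over the two inner spheres, the bundle/clutching data over the inner annuli controlling the smooth structure. Your identification of the mechanism --- each index-$1$ fold carries clutching data in $\pi_3(\mathrm{SO}(4))\cong\mathbb{Z}\oplus\mathbb{Z}$, producing an $S^3$-bundle over $S^4$, and connected sums of such total spaces realise all of $\Theta_7$ --- is precisely what underlies Theorem~\ref{thm:2}, and the passage from two concentric spheres to three is exactly what is needed to reach all $28$ classes rather than only the $16$ bundle-type ones.

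What you flag as the ``genuine difficulty'' is indeed the substance of the cited proofs: one must check that gluing the local fold models along the inter-sphere annuli yields a well-defined closed smooth manifold whose diffeomorphism type depends on the clutching parameters in the expected essentially additive way, and that the available parameters exhaust $\Theta_7$. Your sketch does not carry this out, but it names the right ingredients and nothing in it is wrong; what is missing is a careful bundle/handle computation (or, equivalently, a reduction of the three-sphere picture to a connected sum of two copies of the two-sphere picture of Theorem~\ref{thm:2}), not a missing idea.
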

Moreover, the following theorem is obtained. There exist exactly $28$ types of differentiable structures of $7$-dimensional oriented homotopy spheres and exactly $16$ types are obtained as the differentiable structures of total spaces of linear bundles whose fiber are $S^3$ over $S^4$ including that of the standard sphere. All $7$-dimensional homotopy spheres are represented as connected sums of these total spaces.
\begin{Thm}[\cite{kitazawa}, \cite{kitazawa2} and so on.]
\label{thm:2}
A $7$-dimensional homotopy sphere $M$ admits a fold map $f:M \rightarrow {\mathbb{R}}^4$ such that $f {\mid}_{S(f)}$ is embedding and that $f(S(f))=\{x \in {\mathbb{R}}^4 \mid ||x||=1\}$ if and only if $M$ is a standard sphere.
A $7$-dimensional homotopy sphere $M$ admits a fold map $f:M \rightarrow {\mathbb{R}}^4$ such that the following properties hold if and only if $M$ is the total space of a linear bundle whose fiber is $S^3$ over $S^4$.
\begin{enumerate}
\item $f {\mid}_{S(f)}$ is embedding and $f(S(f))=\{x \in {\mathbb{R}}^4 \mid ||x||=1,2\}$.
\item For any connected component $C \subset f(S(f))$ and a small closed tubular neighborhood $N(C)$, the bundle given by the projection given by the composition of $f {\mid}_{f^{-1}(N(C))}$ with the canonical projection to $C$ is trivial.
\item The index of each singular point is always $0$ or $1$.
\item For each connected component of the regular value set of $f$, the preimage of a regular value in each connected component is, empty, diffeomorphic to $S^3$, and diffeomorphic to $S^3 \sqcup S^3$, respectively.
\end{enumerate}
\end{Thm}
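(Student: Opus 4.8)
The plan is to treat the two biconditionals in turn; in each, the ``if'' direction is an explicit construction of a round fold map (singular value set a union of concentric spheres) and the ``only if'' direction is a structural analysis of the given fold map, so the two parts of each statement mirror one another.

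\emph{First biconditional.} For ``if'', realize $S^7$ as the unit sphere of $\mathbb R^4\times\mathbb R^4$ and let $f$ be the restriction of the first projection; computing the differential shows $S(f)=\{(x,0):\|x\|=1\}$, that $f\mid_{S(f)}$ embeds onto $\{\|x\|=1\}$ after rescaling, and that $f$ is special generic (all indices $0$). For ``only if'', let $f\colon M\to\mathbb R^4$ satisfy the hypothesis. First I would note $f^{-1}(\{\|x\|>1\})=\varnothing$: it is a smooth fiber bundle over a connected base and compactness of $M$ forces an empty fiber somewhere, hence everywhere. Then I would show every singular point has index $0$: in the local fold normal form, if the index were $\geq 1$ then both local sheets of the target would have nonempty preimage, contradicting emptiness on the outer side of $\{\|x\|=1\}$; so $f$ is special generic. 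Passing to the Stein factorization $f=\bar f\circ q_f$, $q_f\colon M\to W_f$, the hypotheses give $\partial W_f\cong S(f)\cong S^3$ and force $\bar f\mid_{\partial W_f}$ to embed onto $\{\|x\|=1\}$ from the inside; since $\bar f\mid_{\mathrm{int}\,W_f}$ is then a proper local diffeomorphism onto the simply connected $\{\|x\|<1\}$, it is a diffeomorphism, so $W_f\cong D^4$. The structure theorem for special generic maps (\cite{saeki}) identifies $M$ with the boundary of a linear $D^4$-bundle over $W_f\cong D^4$, which is trivial as $D^4$ is contractible, so $M\cong\partial(D^4\times D^4)\cong S^7$ is standard.

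\emph{Second biconditional, ``if''.} Given $M$ the total space of a linear $S^3$-bundle $\xi$ over $S^4$, write $S^4=D^4_-\cup_{S^3}D^4_+$, trivialize $\xi$ over each hemisphere, so $M=(D^4_-\times S^3)\cup_{\Phi}(D^4_+\times S^3)$ for a bundle automorphism $\Phi$ of $S^3\times S^3$ over $S^3$ carrying the clutching class $\phi\in\pi_3(SO(4))$ of $\xi$. On $D^4_+\times S^3$ I would use the explicit fold map $(u,y)\mapsto(2-\|u\|^2)y$, whose image is $\{1\le\|x\|\le2\}$, whose singular points lie over $\{\|x\|=2\}$ with index $0$ (an $S^3$ fiber collapsing to a point), and which is a product over $S^3$ near $\partial(D^4_+\times S^3)=S^3\times S^3$. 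On $D^4_-\times S^3$ I would use, symmetrically, a fold map onto $\{\|x\|\le1\}$ with one concentric index-$1$ fold locus over $\{\|x\|=1\}$ along which the generic fiber $S^3$ is cut along a separating $2$-sphere into $S^3\sqcup S^3$, again a product near its boundary. Because each piece is a product near the gluing $3$-sphere, any $\Phi$ extends them, and the assembled $f\colon M\to\mathbb R^4$ satisfies (1)--(4) by construction; the concrete realization of both pieces is in \cite{kitazawa}, \cite{kitazawa2}.

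\emph{Second biconditional, ``only if'', and the main obstacle.} Conversely, given $f$ with (1)--(4): compactness forces $f^{-1}(\{\|x\|>2\})=\varnothing$, so by (4) the preimages over the three regions are $\varnothing$, $S^3$, $S^3\sqcup S^3$ from outside inwards; examining fiber transitions, the folds over $\{\|x\|=2\}$ have index $0$ and those over $\{\|x\|=1\}$ have index $1$ (cutting the middle $S^3$ along a separating $S^2$). Using condition (2) I would trivialize $f^{-1}$ of a small tubular neighbourhood of each singular sphere, and, together with the bundle structure over each open region and triviality of $S^3$-bundles over $S^3$, present $M$ as a union of a piece over $\{\|x\|\le1+\varepsilon\}$ and a piece over $\{1+\varepsilon\le\|x\|\le2\}$; the $D^4$-cap over the outer sphere fills the second piece to $D^4\times S^3$, and the $1$-handle over the inner sphere, cancelled against the adjacent trivial bundle, does the same for the first. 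The gluing of the two pieces is a bundle automorphism of $S^3\times S^3$ over $S^3$ (it is the identity on $f^{-1}(\{\|x\|=1+\varepsilon\})$), so $M$ is a smooth $S^3$-bundle over $S^4=D^4\cup_{S^3}D^4$, hence a linear one since $\mathrm{Diff}^+(S^3)\simeq SO(4)$. The hard part is exactly this reassembly: the pieces $f^{-1}(\text{region})$ are glued by diffeomorphisms of $S^3\times S^3$ carrying nontrivial ``twisting'' along the singular spheres, and one must use (2) to pin these down and then recognize the bundle clutching; in particular the twist along the index-$1$ inner sphere is essential and cannot be taken trivial. Carrying out this bookkeeping precisely — and verifying that the two explicitly constructed pieces are genuinely diffeomorphic to $D^4\times S^3$ with the clutching of $\xi$ faithfully reproduced — is where the bulk of the proof lies, and is the content of \cite{kitazawa}, \cite{kitazawa2}.
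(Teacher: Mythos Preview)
This theorem is not proved in the present paper: it appears in the introduction as a result quoted from the author's earlier work \cite{kitazawa}, \cite{kitazawa2}, with no argument given here. There is therefore no proof in this paper to compare your proposal against.

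Your outline is nonetheless in the spirit of those references (round fold maps with concentric singular spheres; reconstruction of $M$ from the Reeb space and the local triviality condition~(2)). Two cautions. First, your description of the index-$1$ transition as ``the generic fiber $S^3$ is cut along a separating $S^2$ into $S^3\sqcup S^3$'' is not right: cutting $S^3$ along $S^2$ yields two $3$-discs, not two $3$-spheres. The correct local picture for an index-$1$ fold with $m-n=3$ is that, crossing outward, the fiber changes by a connected sum $S^3\# S^3\cong S^3$; going inward one undoes this, which is how $S^3$ becomes $S^3\sqcup S^3$. Second, in the first ``only if'' you jump to $M\cong\partial(D^4\times D^4)\cong S^7$; Saeki's structure theorem gives $M$ as a linear $S^{3}$-bundle over $W_f\cong D^4$ capped by a linear $D^{4}$-bundle over $\partial W_f\cong S^3$, and one still has to argue that the gluing diffeomorphism of $S^3\times S^3$ yields the \emph{standard} smooth $S^7$ rather than an exotic one---this is exactly where the differential topology enters and cannot be skipped. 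Your appeal to $\mathrm{Diff}^{+}(S^3)\simeq SO(4)$ (Hatcher) in the second ``only if'' is correct and is indeed what makes a smooth $S^3$-bundle over $S^4$ linear.
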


We also introduce some of the main results of \cite{kitazawa5}.

\begin{Thm}[\cite{kitazawa5}]
\label{thm:3}
Let $m>n \geq 1$ be integers. Let $k>1$ be an integer satisfying $2k \leq n$, $m-n>k,n-k$, $m-n \neq n$, $k+(m-n) \neq n$ and $(n-k)+(m-n) \neq n$.
Let $\{G_j\}_{j=0}^m$ be a sequence of free and finitely generated commutative groups such that $G_0=G_m=\mathbb{Z}$, that $G_j=G_{m-j}$ for $0 \leq j \leq m$ and that $G_j$ is zero except the case $j=0,k,n-k,n,m-n,m-n+k,m-k,m$. 
In this situation, there exist a closed and simply-connected manifold $M$ of dimension $m$ such that homology group is free and that $H_j(M;\mathbb{Z})$ is isomorphic to $G_j$ and a fold map $f:M \rightarrow {\mathbb{R}}^n$ such that $f {\mid}_{S(f)}$ is embedding, that the index of each singular point is always $0$ or $1$, and that for each connected component of the regular value set of $f$, the preimage of a regular value in each connected component is, empty, diffeomorphic to $S^3$, or diffeomorphic to $S^3 \sqcup S^3$.
Furthermore, if $G_k$, $G_{n-k}$ and $G_n$ are non-trivial groups, then there exist infinitely many closed and simply-connected manifolds $M_{\lambda}$ of dimension $m$ such that $H_j(M_{\lambda};\mathbb{Z})$ is isomorphic to $G_j$ and that the integral cohomology rings of $M_{{\lambda}_1}$ and $M_{{\lambda}_2}$ are not isomorphic for distinct ${\lambda}_1,{\lambda}_2 \in \Lambda$ admitting the fold maps as before.
\end{Thm}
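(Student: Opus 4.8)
\emph{Proof proposal.} Put $l:=m-n$; the hypotheses give $l\ge 3$ (so $l\ge 2$ and $S^{l}$ is simply connected) and $1<k\le n-k<n$. The plan is to realise $M$, and the manifolds $M_{\lambda}$ of the last part, as total spaces lying over an explicit Reeb space (Stein factorisation) built from handles, the fold map being the composite of the quotient map to the Reeb space with an embedding of the latter into $\mathbb{R}^{n}$ chosen so that all fold loci lie over disjoint concentric spheres, exactly as in Theorems~\ref{thm:1} and~\ref{thm:2}. I would first build the ``homology skeleton''. Let $W$ be a compact, simply connected $n$-manifold with non-empty boundary, obtained from $D^{n}$ by boundary connected sums with copies of $S^{k}\times D^{n-k}$ and $S^{n-k}\times D^{k}$ (neighbourhoods of unknotted spheres, since $k,n-k<n$) in multiplicities chosen so that $H_{\ast}(W;\mathbb{Z})$ is free and equal to $\mathbb{Z}$, $G_{k}$, $G_{n-k}$ in degrees $0$, $k$, $n-k$ and to $0$ otherwise; such a $W$ embeds in $\mathbb{R}^{n}$. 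Let $M_{0}:=\partial(W\times D^{l+1})$ with its tautological projection onto $W$, and let $f_{0}$ be its composite with an embedding $W\hookrightarrow\mathbb{R}^{n}$ isotoped so that the handles sit over concentric annuli; then $f_{0}$ is a special generic map, hence a fold map with every index $0$, whose singular set maps diffeomorphically onto $\partial W$ and whose regular fibres are empty or diffeomorphic to $S^{l}$. A Mayer--Vietoris computation for $M_{0}=(W\times S^{l})\cup_{\partial W\times S^{l}}(\partial W\times D^{l+1})$, together with Lefschetz duality $H_{j-l}(W,\partial W)\cong H^{m-j}(W)$, gives $H_{j}(M_{0};\mathbb{Z})\cong H_{j}(W;\mathbb{Z})\oplus H^{m-j}(W;\mathbb{Z})$, which equals $G_{j}$ for every $j$; here the conditions $m-n\ne n$, $k+(m-n)\ne n$, $(n-k)+(m-n)\ne n$, $k<m-n$ ensure that the ``low'' degrees $0,k,n-k$ are disjoint from the ``high'' degrees $m,m-k,m-n+k$, so nothing collides or cancels, and van Kampen gives $\pi_{1}(M_{0})=0$. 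This proves the first assertion whenever $G_{n}=0$ (equivalently one may invoke the construction recalled in the Introduction, $M_{0}$ being then a connected sum of products of standard spheres one of whose factors has dimension $<n$).

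The genuinely new point is to produce $H_{m-n}(M)\cong G_{m-n}=G_{n}$ (and $H_{n}(M)\cong G_{n}$) when $G_{n}\ne 0$; since the top homology of a connected $n$-manifold with boundary vanishes this cannot come from a special generic model, and it is here that index-$1$ folds are indispensable. I would enlarge $W$ to a Reeb-space-type polyhedron $W_{f}$ by gluing, over $\operatorname{rk}G_{n}$ disjoint embedded $n$-discs in $\operatorname{int}W$, a standard local piece over which the generic fibre becomes $S^{l}\sqcup S^{l}$, the two components being joined along an embedded $(n-1)$-sphere of index-$1$ fold points by a round $1$-handle (the fibrewise connected sum, returning a single $S^{l}$). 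On source manifolds this realises precisely the surgeries adjoining one free generator to $H_{m-n}$ and, Poincar\'e-dually, one to $H_{n}$ and nothing else; done $\operatorname{rk}G_{n}$ times this produces the desired $M$. One then recomputes $H_{\ast}$ and $\pi_{1}$ organised over the strata of $W_{f}$, checks that the index-$1$ contribution is exactly $G_{n}$ in degrees $m-n$ and $n$ (the surgeries lie in the stable range, so $\pi_{1}$ stays $0$), and verifies the three listed properties of $f$: after the embedding of $W_{f}$ and a small isotopy the index-$0$ and index-$1$ fold loci lie over disjoint concentric spheres (so $f|_{S(f)}$ is an embedding), the indices are $0$ or $1$ by construction, and the regular fibre is empty, $S^{l}$, or $S^{l}\sqcup S^{l}$ according to the stratum of $W_{f}$ over which the regular value lies.

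For the infinite family, assume $G_{k},G_{n-k},G_{n}$ are all non-trivial. The cup product restricts to a bilinear pairing $\mu\colon H^{k}(M;\mathbb{Z})\otimes H^{n-k}(M;\mathbb{Z})\to H^{n}(M;\mathbb{Z})$, i.e.\ a pairing $G_{k}\otimes G_{n-k}\to G_{n}$, and any graded-ring isomorphism between two such manifolds restricts to isomorphisms of $H^{k},H^{n-k},H^{n}$ intertwining the pairings, so the class of $\mu$ under $\operatorname{GL}(G_{k})\times\operatorname{GL}(G_{n-k})\times\operatorname{GL}(G_{n})$ is an invariant of the cohomology ring. In the construction, generators of $H^{k}$ and $H^{n-k}$ are carried by the $k$- and $(n-k)$-handles of $W_{f}$ and a generator of $H^{n}$ by an index-$1$ fold piece; by prescribing how the framings of those handles link the discs that support the index-$1$ pieces one can realise any preassigned $\mu$. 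Since there are infinitely many pairwise inequivalent integral pairings $G_{k}\otimes G_{n-k}\to G_{n}$ --- already the maps $\mathbb{Z}\otimes\mathbb{Z}\to\mathbb{Z}$ given by multiplication by $\lambda\in\mathbb{Z}_{\ge 0}$, realised on a single pair of generators with all other relevant products zero, are pairwise inequivalent because a ring isomorphism sends each generator to $\pm$ a generator and hence preserves $|\lambda|$ --- one obtains infinitely many $M_{\lambda}$ with $H_{j}(M_{\lambda};\mathbb{Z})\cong G_{j}$ but pairwise non-isomorphic integral cohomology rings, each admitting a fold map as above.

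The step I expect to be the main obstacle is the combination of the index-$1$ surgery bookkeeping of the second paragraph with the realisability claim of the third: one must choose the local index-$1$ fold pieces and their linking with the $k$- and $(n-k)$-handles so that the resulting $M$ (resp.\ $M_{\lambda}$) has \emph{exactly} the prescribed homology with no spurious classes, remains simply connected, and carries the prescribed cup-product pairing, while the whole modified Reeb space still embeds in $\mathbb{R}^{n}$ with all fold loci over concentric spheres; verifying that the dimension hypotheses $2k\le n$, $k,n-k<m-n$, $m-n\ne n$, $k+(m-n)\ne n$, $(n-k)+(m-n)\ne n$ are precisely what makes all of this simultaneously consistent is the crux.
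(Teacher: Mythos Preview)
The paper does not prove Theorem~\ref{thm:3}; it is quoted from \cite{kitazawa5} as background, so there is no in-paper proof to compare against. The only hint the present paper gives about the original argument is the remark after Proposition~\ref{prop:5}: \cite{kitazawa5} obtains the result ``with concrete and complete descriptions of the (integral) cohomology rings of the manifolds of some classes via compact submanifolds with no boundaries representing corresponding homology classes of the manifolds and their Poincar\'e duals.'' Your outline is fully compatible with this description: the handle cores of $W$ and the surgery spheres coming from the index-$1$ local pieces are exactly such submanifolds, and the cup product $H^{k}\otimes H^{n-k}\to H^{n}$ is then computed by their intersection numbers via Poincar\'e duality. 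Your special-generic base case $M_{0}=\partial(W\times D^{l+1})$ is precisely Example~\ref{ex:1} combined with Proposition~\ref{prop:4}, and the index-$1$ attachment you describe is the standard round-$1$-handle surgery used throughout \cite{kitazawa2,kitazawa3,kitazawa5}. So the strategy is the expected one.

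Two small cautions. First, the statement as printed has regular fibres $S^{3}$ and $S^{3}\sqcup S^{3}$; this is a remnant of the $(m,n)=(7,4)$ case and should read $S^{m-n}$ in general, which you silently (and correctly) do. Second, your claim that one can realise ``any preassigned $\mu$'' by framings/linkings is stronger than what you need and is not obviously true in this generality; what the submanifold/Poincar\'e-dual picture actually delivers, and what suffices for the theorem, is the one-parameter family you isolate at the end (multiplication by $\lambda$ on a single pair of generators, zero elsewhere), obtained by placing one index-$1$ piece so that its dual $n$-cycle meets the chosen $k$- and $(n-k)$-cores with prescribed intersection number. With that weakening the outline is sound, and your self-identified ``main obstacle'' is exactly where the work in \cite{kitazawa5} lies.
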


For example, we can set $(m,n,k)=(7,4,2)$.
Through this, for example, we can see that the topologies of singular value sets and the preimages affect cohomology rings of the manifolds. In short, we can catch information of the cohomology rings of manifolds via construction of explicit fold maps. See also \cite{kitazawa4} as a related work.

We obtained these results in trying to obtain information of the topologies and the differentiable structures of fundamental closed and simply-connected manifolds such as homotopy spheres and manifolds represented as their products and connected sums of  these fundamental manifolds and cohomology rings of more general closed and simply-connected manifolds via explicit fold maps.
\subsection{Main theorems: triple Massey products of closed and simply-connected manifolds admitting special generic maps.}
The main theorems of the present paper are the following two. We present some statements of the first main theorem leaving the terminologies and notation we need lalter. This captures much information of cohomology rings and {\it triple Massey products} of closed and simply-connected manifolds of suitable classes via explicit special generic maps on them.
\begin{MainThm}[A partial explanation of Theorem \ref
{thm:4} and Theorem \ref{thm:5}]
\label{mainthm:1}
Let $A$ be a PID.
Take an arbitrary compact and simply-connected polyhedron $X_i$ of dimension $i \geq 5$ having free homology modules and triplets of three integral cohomology classes of degree $2$ for which we can define the
 triple Massey products which do not vanish such that the following properties hold as ones in \cite{dranishnikovrudyak} or \cite{dranishnikovrudyak2}.
\begin{enumerate}
\item $H_i(X_{i};A)$ is not zero.
\item For any pair $c_1 \in H^{j_1}(X_{i};A)$ and $c_2 \in H^{j_2}(X_{i};A)$ of cohomology classes satisfying $2 \leq j_1, j_2 \leq i-1$, the cup product of $c_1$ and $c_2$ always vanishes.
\end{enumerate}

In this situation, for arbitrary integers $m$ and $n$ satisfying $n \geq 2i+1$ and $m-i-1 \geq i$, there exist a closed and simply-connected manifold $M_{i,m,n}$ having a triplet of three integral cohomology classes of degree $2$ for which we can define the
 triple Massey product which does not vanish and a special generic map $f_{i,m,n}:M_{i,m,n} \rightarrow {\mathbb{R}}^n$ satisfying the following properties.
\begin{enumerate}
\item The Reeb space $W_{f_{i,m,n}}$ collapses to $X_i$.
\item The restriction of the map $f_{i,m,n}$ to the singular set is an embedding.
\item The $\leq m-i-1$-algebra of $H^{\ast}(M_{i,m,n};A)$ is isomorphic to $H^{\ast}(X_i;A)$.
\item For any pair of cohomology classes $C_{c_1} \in H^{j_1}(M;A)$ and $C_{c_2} \in H^{j_2}(M;A)$ satisfying $0 \leq j_1,j_2 \leq m-i-1$ and  $j_1+j_2 \geq i+1$, the product is zero.
\item For any triplet $(C_{c_1},C_{c_2},C_{c_3}) \in H^{j_1}(W_f;A) \times H^{j_2}(W_f;A) \times H^{j_3}(W_f;A)$ of cohomology classes the triple Massey product $<C_{c_1},C_{c_2},C_{c_3}>$ of which we can define, we can define the triple Massey product $<{q_{f_{i,m,n}}}^{\ast}(C_{c_1}),{q_{f_{i,m,n}}}^{\ast}(C_{c_2}),{q_{f_{i,m,n}}}^{\ast}(C_{c_3})>$ of the triplet $({q_{f_{i,m,n}}}^{\ast}(C_{c_1}),{q_{f_{i,m,n}}}^{\ast}(C_{c_2}),{q_{f_{i,m,n}}}^{\ast}(C_{c_3})) \in H^{j_1}(M;A) \times H^{j_2}(M;A) \times H^{j_3}(M;A)$ with $0 \leq j_1,j_2,j_3 \leq m-i-1$ and $j_1+j_2+j_3 \geq i+2$ and this always vanishes.
\end{enumerate}
\end{MainThm}

\begin{MainThm}[Theorem \ref{thm:6} and Theorem \ref{thm:7}]
\label{mainthm:2}
Let $A$ be a PID.
\begin{enumerate}
\item A $7$-dimensional closed and simply-connected manifold $M$ having a triplet of three cohomology classes of degree $2$ for which we can define the
 triple Massey product which does not vanish under the assumption that the coefficient ring is $A$ never admits a special generic map $f:M \rightarrow {\mathbb{R}}^n$ for $n=1,2,3,4,5$.
\item  Let $n=4,5$ and let $\{G_j\}_{j=0}^7$ be a sequence of free and finitely generated modules over $A$ such that $G_0=G_m=A$, that $G_j=G_{7-j}$ for $0 \leq j \leq 7$, that $G_1=G_6=\{0\}$ and that the rank of $G_2$ is greater than or equal to $4$.
In this situation, there exists a pair $(M_1,M_2)$ of $7$-dimensional closed and simply-connected manifolds satisfying the following properties.
\begin{enumerate}
\item $H_j(M_i;A) \cong G_j$ for $i=1,2$ and $0 \leq j \leq 7$.
\item The cohomology rings $H^{\ast}(M_1;A)$ and $H^{\ast}(M_2;A)$ are isomorphic. 
\item $M_1$ admits a special generic map into ${\mathbb{R}}^n$.
\item $M_2$ admits no special generic map into ${\mathbb{R}}^n$.
\item $M_2$ admits a fold map into ${\mathbb{R}}^n$  
\end{enumerate}
This holds for $n=3$ if we add the constraint $G_3=\{0\}$.
\end{enumerate}
\end{MainThm}

This new result is a kind of theorems on restrictions on the topologies and the differentiable structures of manifolds admitting special generic maps into the Euclidean space of a fixed dimension, discussed in \cite{saeki}, \cite{saeki2}, \cite{saekisakuma}, \cite{saekisakuma2}, \cite{wrazidlo} and so on. Corollary \ref{cor:1} is
 also a kind of this in the present paper, which was already shown in \cite{kitazawa5}.

Last, to know famous facts on $7$-dimensional closed and simply-connected manifolds and that the class is attractive, see \cite{eellskuiper}, \cite{milnor} and so on as great classical well-known works and \cite{kreck} and so on as recent interesting works via concrete bordism theory and related theory.

\subsection{The content of the present paper}
\label{subsec:1.4}

The organization of the paper is as the following.
In the next section, we review the {\it Reeb space} of a fold map. The {\it Reeb space} of a fold map is a polyhedron whose dimension is equal to the dimension of the target space, defined as the space of all connected components of preimages of the map. This object inherits topological information of the manifold such as homology groups, cohomology rings and so on much in several situations. We review an explicit situation explaining this well and important in the present paper as Proposition \ref{prop:1}. The class of maps we can apply this contains the class of special generic maps as a proper subclass and maps in the previous subsection.  
The third section is for explicit special generic maps. The last section is devoted to main theorems. We utilize several algebraic topological and differential topological theory on manifolds and polyhedra. As an important tool, we explain several notions on cohomology classes such as {\it {\rm (}triple{\rm )} Massey products} without precise explanations. This together with theory in the second and third sections yields main theorems.

\section{Reeb spaces of fold maps and homology groups and cohomology rings of the manifolds.}
\label{sec:2}

For a continuous map $c:X \rightarrow Y$ between topological spaces, we can define an equivalence relation ${\sim}_c$ on $X$ by the following rule: $p_1 {\sim}_c p_2$ if and only if $p_1$ and $p_2$ are in a same connected component of a preimage $c^{-1}(q)$ ($q \in Y$). We call the quotient space $W_c:=X/{{\sim}_c}$ the {\it Reeb space} of $c$ and denote the quotient map by $q_c:X \rightarrow W_c$. We can define in a unique way and also define a map $\bar{f}$ satisfying $f=\bar{f} \circ q_f$. The Reeb space is a $\dim Y$-dimensional polyhedron for a smooth map of a considerable wide class such that the dimension of the domain is greater than that of the target space. Fold maps satisfy
 the property. \cite{kobayashisaeki}, \cite{shiota} and so on show this.

A {\it simple} fold map $f:M \rightarrow N$ from a closed, connected and orientable manifold of dimension $m$ into an $n$-dimensional manifold $N$ with no boundary satisfying $m>n$ is a fold map such that $q_f {\mid}_{S(f)}$ is injective. 
For any connected component $C$ of $q_f(S(f))$, which is regarded as a closed smooth manifold, let $N(C)$ be a small regular neighborhood of $C$. This is regarded as a PL bundle over $C$ whose fiber is a closed interval or a Y-shaped $1$-dimensional polyhedron. The composition of $q_{f} {\mid}_{{q_{f}}^{-1}(N(C))}$ with a canonical projection to $C$ gives a smooth trivial bundle. 

The following proposition states that the Reeb space of a fold map of a suitable class inherits some fundamental algebraic topological information of the manifold. A {\rm PID} means a so-called {\it principal ideal domain} having a unique identity element $1$ different from the zero element.  

\begin{Prop}[\cite{kitazawa2}, \cite{kitazawa3}, \cite{kitazawa4}, \cite{saekisuzuoka} and so on.]
\label{prop:1}
Let $A$ be a commutative group. 
Let $M$ be a closed, connected and orientable manifold of dimension $m$, $N$ be an $n$-dimensional manifold with no boundary and $f:M \rightarrow N$ be a simple fold map such that preimages of regular values
 are always disjoint unions of copies of $S^{m-n}$ and that indices of singular points are always $0$ or $1$. Thus two induced homomorphisms ${q_f}_{\ast}:{\pi}_j(M) \rightarrow {\pi}_j(W_f)$, ${q_f}_{\ast}:H_j(M;A) \rightarrow H_j(W_f;A)$, and ${q_f}^{\ast}:H^j(W_f;A) \rightarrow H^j(M;A)$ are isomorphisms of groups and modules for $0 \leq j \leq m-n-1$. 

Furthermore, we have the following properties for specific cases.
\begin{enumerate}
\item Let $A$ be a commutative ring. Let $J$ be the set of integers smaller than or equal to $0$ and larger than or equal to $m-n-1$ and let ${\oplus}_{j \in J} H^{j}(W_f;A)$ and ${\oplus}_{j \in J} H^{j}(M;A)$ be the algebras obtained by replacing the $j$-th modules of the cohomology rings $H^{\ast}(W_f;A)$ and $H^{\ast}(M;A)$ by $\{0\}$ for $j \geq m-n$, respectively. In this situation, $q_f$ induces an isomorphism between the commutative algebras
${\oplus}_{j \in J} H^j(W_f;A)$ and ${\oplus}_{j \in J} H^{j}(M;A)$, defined as the restriction of the original homomorphism ${q_f}^{\ast}$. 
\item Furthermore, if $A$ is a PID and $m=2n$ holds, then the rank of $M$ is twice the rank of $W_f$ and in addition if $H_{n-1}(W_f;A)$, which is isomorphic to $H_{n-1}(M;A)$, is free, then these two modules are also free.
\end{enumerate}
Special generic maps satisfy the assumption and we can replace the inequality $0 \leq j \leq m-n-1$ by $0 \leq j \leq m-n$. 
\end{Prop}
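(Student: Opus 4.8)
The plan is to analyze the quotient map $q_f\colon M\to W_f$ through the normal forms of index-$0$ and index-$1$ fold singularities and then to patch local computations over a stratification of the target. Write $S_0(f),S_1(f)\subset S(f)$ for the (closed, $(n-1)$-dimensional, boundaryless) submanifolds of index-$0$ and index-$1$ singular points; since $f$ is simple, $q_f$ embeds each of them into $W_f$. Choose small, pairwise disjoint regular neighbourhoods $N_0,N_1\subset W_f$ of $q_f(S_0(f))$ and of $q_f(S_1(f))$ --- as recalled before the statement, $N_0$ is an interval bundle and $N_1$ a $Y$-bundle over the relevant components of $S(f)$ --- and put $W_{\mathrm{reg}}=\overline{W_f\setminus(N_0\cup N_1)}$. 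Over the interior of $W_{\mathrm{reg}}$ there are no singular values, so $q_f$ restricts there to a proper smooth $S^{m-n}$-bundle; over $N_0$ the preimage ${q_f}^{-1}(N_0)$ deformation retracts onto $S_0(f)$ and, after this retraction, $q_f$ becomes a homeomorphism onto $N_0$; over $N_1$ the only non-trivial preimage is the exceptional fibre over the centre of each $Y$, which is the one-point union $S^{m-n}\vee S^{m-n}$ obtained from a nearby regular fibre by collapsing an equatorial $S^{m-n-1}$; and over the overlaps $W_{\mathrm{reg}}\cap N_j$ the map is again an $S^{m-n}$-bundle. The key point is that every fibre of $q_f$ --- a point, an $S^{m-n}$, or an $S^{m-n}\vee S^{m-n}$ --- is $(m-n-1)$-connected.

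With this in hand I would prove that $q_f$ is $(m-n)$-connected, i.e.\ that it induces isomorphisms on $\pi_j$ and on $H_j(-;\mathbb Z)$ for $j\le m-n-1$ and an epimorphism in degree $m-n$. For the bundle-like pieces this follows from the Serre spectral sequence of an $S^{m-n}$-bundle together with $\widetilde H_q(S^{m-n};\mathbb Z)=0$ for $q\le m-n-1$; for the $N_0$-piece $q_f$ is a homotopy equivalence; and for the $N_1$-piece the same conclusion comes from a direct reading of the local model, the exceptional fibre $S^{m-n}\vee S^{m-n}$ being still $(m-n-1)$-connected. I would then assemble these using the Mayer--Vietoris sequences of $M={q_f}^{-1}(W_{\mathrm{reg}})\cup{q_f}^{-1}(N_0)\cup{q_f}^{-1}(N_1)$ and $W_f=W_{\mathrm{reg}}\cup N_0\cup N_1$ together with the five lemma, and, for the homotopy groups, van Kampen's theorem and the relative Hurewicz theorem. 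The universal coefficient theorems, applied naturally in $q_f$, then transfer the integral statement to arbitrary coefficients: ${q_f}_\ast\colon\pi_j(M)\to\pi_j(W_f)$, ${q_f}_\ast\colon H_j(M;A)\to H_j(W_f;A)$ and ${q_f}^\ast\colon H^j(W_f;A)\to H^j(M;A)$ are isomorphisms for $0\le j\le m-n-1$ and any coefficient group (respectively ring) $A$.

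Statement (1) is then essentially formal: ${q_f}^\ast$ is always a homomorphism of graded rings, its restriction to degrees $\le m-n-1$ is a module isomorphism by the previous step, and on both sides the truncated product is the cup product followed by the projection onto degrees $\le m-n-1$, an operation that commutes with the degree-preserving map ${q_f}^\ast$; hence ${q_f}^\ast$ restricts to an isomorphism of the truncated algebras. For statement (2) put $m=2n$, so the isomorphism range is $j\le n-1$. Because $W_f$ is a compact $n$-dimensional polyhedron, $H_j(W_f;A)=0$ for $j>n$ and $H_n(W_f;A)$, being a submodule of the free module of $n$-chains over the PID $A$, is free. Poincar\'e duality and the universal coefficient theorem on the closed orientable $2n$-manifold $M$ give $\mathrm{rank}\,H_j(M;A)=\mathrm{rank}\,H_{2n-j}(M;A)$, so $\mathrm{rank}\,M=2\sum_{j\le n-1}\mathrm{rank}\,H_j(W_f;A)+\mathrm{rank}\,H_n(M;A)$, and the desired equality $\mathrm{rank}\,M=2\,\mathrm{rank}\,W_f$ reduces to $\mathrm{rank}\,H_n(M;A)=2\,\mathrm{rank}\,H_n(W_f;A)$; this last identity follows from the spectral sequence of $q_f$, which degenerates for dimension reasons ($\dim W_f=n=m-n$ force the higher differentials to vanish) and exhibits $H_n(M;A)$ in terms of $H_n(W_f;A)$ and of $H_0(W_f;\mathcal H_n)$ for the fibrewise-homology sheaf $\mathcal H_n$, the two ranks agreeing by the Poincar\'e-duality-type symmetry of $W_f$ with coefficients in $\mathcal H_n$ that is forced by the orientability of the closed total space $M$ (see \cite{saekisuzuoka}, \cite{kitazawa2}, \cite{kitazawa3}, \cite{kitazawa4}). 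Freeness propagates: if $H_{n-1}(W_f;A)\cong H_{n-1}(M;A)$ is free, Poincar\'e duality and the universal coefficient theorem on $M$ make $H_n(M;A)$ free, and $H_n(W_f;A)$ is already free. Finally, for a special generic map only index $0$ occurs, so $W_f$ is a compact $n$-manifold with non-empty boundary $q_f(S(f))$ and, as shown by Saeki \cite{saeki}, $M$ is diffeomorphic to the boundary of a compact $(m+1)$-manifold collapsing onto $W_f$ with $q_f$ homotopic to the resulting composite $M\hookrightarrow W_f$; the relative homology of this pair vanishes in the relevant degrees because $\dim W_f=n<m$, which enlarges the isomorphism range to $0\le j\le m-n$; in particular $H_n(W_f;A)=0$ in that case (a compact manifold with non-empty boundary carries no top homology), which makes statement (2) immediate there.

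The step I expect to be the main obstacle is the one over the index-$1$ strata, where $q_f$ is genuinely not a fibre bundle and $W_f$ is not a manifold: one has to open up the $Y$-bundle neighbourhood, identify the pinched fibre $S^{m-n}\vee S^{m-n}$, verify its $(m-n-1)$-connectedness, and check that the Mayer--Vietoris patching is compatible with the $S^{m-n}$-bundle structures over the overlaps. Once this local analysis is secured, everything else is routine spectral-sequence and universal-coefficient/duality bookkeeping.
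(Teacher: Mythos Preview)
Your approach to the main isomorphism statement is correct and is genuinely different from what the paper (and the cited sources) use. You work directly with the quotient map $q_f$, identify the three fibre types (point, $S^{m-n}$, $S^{m-n}\vee S^{m-n}$), observe they are all $(m-n-1)$-connected, and glue via Mayer--Vietoris. The paper instead goes through Proposition~\ref{prop:2}: $M$ bounds a compact orientable $(m{+}1)$-manifold $W$ that collapses to $W_f$, and since $W_f$ is $n$-dimensional, $W$ is built from $M\times[0,1]$ by attaching handles of index $\geq m-n+1$ (index $\geq m-n+2$ in the special generic case because $W_f$ then has non-empty boundary and collapses to dimension $n-1$). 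The long exact sequence of the pair $(W,M)$ then gives the isomorphisms immediately. Your route is more hands-on; the handle/bounding-manifold route is shorter and, crucially, makes part~(2) transparent.

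There is a real gap in your treatment of part~(2). You reduce correctly to $\operatorname{rank} H_n(M;A)=2\operatorname{rank} H_n(W_f;A)$, but your justification---that the Leray spectral sequence degenerates and that $\operatorname{rank} H^0(W_f;\mathcal H_n)=\operatorname{rank} H_n(W_f;A)$ by a ``Poincar\'e-duality-type symmetry of $W_f$''---is not an argument. The Reeb space $W_f$ is not a manifold (it has $Y$-shaped local models along the index-$1$ stratum), the sheaf $\mathcal H_n$ has stalks $0$, $A$, and $A\oplus A$ over the three strata, and no standard duality gives you that rank identity for free. The clean way to close this is exactly the bounding-manifold approach: with $m=2n$, Lefschetz duality on the $(2n{+}1)$-manifold $W$ and $H_{>n}(W;A)\cong H_{>n}(W_f;A)=0$ yield the short exact sequence
\[
0\longrightarrow H^{n}(W_f;A)\longrightarrow H_n(M;A)\longrightarrow H_n(W_f;A)\longrightarrow 0,
\]
from which the rank identity and the freeness statement (when $H_{n-1}(W_f;A)$ is free, so that $\operatorname{Ext}^1(H_{n-1}(W_f;A),A)=0$ and $H^n(W_f;A)$ is free) follow at once. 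A smaller point: your reason for the improved range in the special generic case (``because $\dim W_f=n<m$'') is the same dimension bound already present in the general case; the actual reason is that $W_f$ is a compact manifold with non-empty boundary and hence collapses to an $(n{-}1)$-dimensional spine, raising the minimal handle index by one.
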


We present additional fundamental and important facts related to this. 

The following proposition is important in the next section.

\begin{Prop}[\cite{kitazawa2}, \cite{kitazawa3} and so on.]
\label{prop:2}
In Proposition \ref{prop:1}, $M$ is in the {\rm PL} category bounds a compact, connected and orientable manifold $W$ collapsing to $W_f$. $q_f$ is represented as the composition of the canonical inclusion $i:M \rightarrow W$ and the map representing the collapsing. Moreover, if $f$ is special generic, then $W_f$ collapses to an ($n-1$)-dimensional polyhedron.
\end{Prop}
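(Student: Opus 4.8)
The plan is to thicken the Reeb space fibre-by-fibre according to the local normal forms of fold singular points of index $0$ and $1$, and then to read off both the collapse $W\searrow W_f$ and the factorization of $q_f$ directly from the pieces of the construction; this proceeds along the lines of Saeki's structure theorem for special generic maps \cite{saeki}. First I would decompose $W_f$. Since $f$ is simple, $q_f|_{S(f)}$ is injective and each connected component $C$ of $q_f(S(f))$ is a closed manifold; take a small regular neighbourhood $N(C)\subset W_f$, which as recalled in Section \ref{sec:2} is a trivial PL bundle over $C$ whose fibre is a closed interval (if the index along $C$ is $0$) or a $Y$-shaped tree (if it is $1$). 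The complement $W_f^{\mathrm{reg}}:=W_f\setminus\bigcup_C\mathrm{int}\,N(C)$ is a compact $n$-manifold with boundary over which $q_f$ is a linear $S^{m-n}$-bundle; let $\xi\to W_f^{\mathrm{reg}}$ be the associated linear $(m-n+1)$-plane bundle, so that $q_f^{-1}(W_f^{\mathrm{reg}})=S(\xi)$. From the normal forms of fold points of index $0$ and $1$ I would check that over each $N(C)$ the restriction $q_f^{-1}(N(C))\to N(C)$ is, fibrewise over $C$, the Reeb quotient of a nondegenerate quadratic form of the corresponding index on a disc $D^{m-n+1}$; in particular $q_f^{-1}(N(C))$ is the total space of a linear $D^{m-n+1}$-bundle $E_C\to C$.

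Next I would assemble $W$ by gluing: over $W_f^{\mathrm{reg}}$ the disc bundle $D(\xi)$, and over each $N(C)$ a compact manifold-with-corners $W_C$, namely the fibrewise mapping cylinder of the Reeb quotient $E_C\to N(C)$, which contains $N(C)$ as a spine and whose boundary consists of the face $q_f^{-1}(N(C))$ together with disc bundles over the frontier $N(C)\cap W_f^{\mathrm{reg}}$. These latter faces match the corresponding part of $\partial D(\xi)$, so after corner-smoothing one obtains a compact $(m+1)$-manifold $W$ with $\partial W=q_f^{-1}(W_f^{\mathrm{reg}})\cup\bigcup_C q_f^{-1}(N(C))=M$. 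It is connected because $W_f$ and every piece is, and orientable because $w_1(W)\in H^1(W;\mathbb{Z}/2)$ restricts on $M$ to $w_1(TM)=0$ while the restriction $H^1(W;\mathbb{Z}/2)\to H^1(M;\mathbb{Z}/2)$ is injective by Proposition \ref{prop:1}.

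Each piece collapses onto its spine ($D(\xi)\searrow W_f^{\mathrm{reg}}$ and $W_C\searrow N(C)$), and on overlaps these collapses agree with the disc-bundle collapse onto the zero section over $N(C)\cap W_f^{\mathrm{reg}}$; hence they assemble to a collapse $W\searrow W_f$ whose associated retraction $r\colon W\to W_f$ restricts on $M$ to the sphere-bundle projection over $W_f^{\mathrm{reg}}$ and to the fibrewise Reeb quotient over each $N(C)$ — that is, $r\circ i=q_f$, where $i\colon M\to W$ is the inclusion. Finally, if $f$ is special generic there are no index-$1$ components, $W_f$ is a compact $n$-manifold with $\partial W_f=q_f(S(f))\ne\emptyset$ (nonempty since $m>n$ forces $S(f)\ne\emptyset$), and a compact manifold with nonempty boundary collapses onto a subpolyhedron of dimension at most $n-1$, which is the last assertion.

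The step I expect to be the main obstacle is the analysis near the index-$1$ components, where $N(C)$ itself is not a manifold: one must verify that the fibrewise mapping cylinders $W_C$ are genuine manifolds-with-corners, that their smoothed boundaries reassemble exactly into $M$, and that the local collapses are compatible across every face. Once the linear bundle structures near $S(f)$ are extracted from the fold normal forms together with simpleness, the remainder is a routine, if somewhat lengthy, PL patching argument together with the standard fact on collapsing compact manifolds with nonempty boundary.
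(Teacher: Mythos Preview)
The paper does not actually prove Proposition~\ref{prop:2}; it is imported from \cite{kitazawa2} and \cite{kitazawa3}. The only information the paper itself offers about $W$ appears later, in the proof of Proposition~\ref{prop:5}, where $W$ is described as obtained from $M\times[0,1]$ by attaching handles of index at least $m-n+1$ (at least $m-n+2$ when $f$ is special generic) along $M\times\{1\}$. Your construction---thickening $W_f$ piece by piece into a disc bundle over the regular part and filled-in blocks over each $N(C)$---is the dual picture, building $W$ from the Reeb space outward rather than from $M$ inward; it is essentially Saeki's argument for special generic maps in \cite{saeki}, extended to allow index-$1$ folds. Both viewpoints produce the same $W$: the handle description makes the isomorphism range $0\le j\le m-n-1$ in Proposition~\ref{prop:1} immediate, while your bundle description makes the factorization $q_f=r\circ i$ and the collapse $W\searrow W_f$ immediate.

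One point to tighten: you assert that $q_f$ is a \emph{linear} $S^{m-n}$-bundle over $W_f^{\mathrm{reg}}$ and that $q_f^{-1}(N(C))$ is a \emph{linear} $D^{m-n+1}$-bundle over $C$, but the hypotheses give only smooth bundles, and $\mathrm{Diff}(S^{m-n})$ need not retract to $O(m-n+1)$ (indeed Proposition~\ref{prop:3} is stated precisely because such structures are not automatic for $f$). This is why the statement is placed in the PL category: by the Alexander trick the restriction $\mathrm{PL}(D^{m-n+1})\to\mathrm{PL}(S^{m-n})$ is a homotopy equivalence, so every PL sphere bundle bounds a PL disc bundle and your $D(\xi)$ exists as a PL manifold even without a linear structure. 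With that adjustment your outline is sound, and you have correctly identified the index-$1$ mapping-cylinder pieces as the step needing the most care.
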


The following two propositions will be useful in constructing examples for the main theorems later.

\begin{Prop}
\label{prop:3}
In the situation of Proposition \ref{prop:1}, there exists a simple fold map $f_0:M_0 \rightarrow N$ such that preimages of regular values
 are always disjoint unions of copies of $S^{m-n}$ and that indices of singular points are always $0$ or $1$ on a closed, connected and orientable manifold $M_0$ of dimension $m$ satisfying the following properties.
\begin{enumerate}
\item $W_{f_0}=W_f$.
\item $\bar{f_0}=\bar{f}$.
\item At least one, two or three of the following properties hold.
\begin{enumerate}
\item For any connected component $C$ of $q_f(S(f))=q_{f_0}(S(f_0))$, which is regarded as a closed smooth manifold, let $N(C)$ be a small regular neighborhood of $C$, regarded as a PL bundle over $C$ whose fiber is a closed interval or a Y-shaped $1$-dimensional polyhedron, the composition of $q_{f_0} {\mid}_{{q_{f_0}}^{-1}(N(C))}$ with a canonical projection to $C$ gives a smooth trivial bundle.  
\item For any connected component $R$ of $W_f-{\bigcup}_{C} N(C)$, which is regarded as a compact
 smooth manifold, $q_f {\mid}_{{q_f}^{-1}(R)}$ gives a smooth trivial bundle over $R$ whose fiber is $S^{m-n}$.  
\item $W$ in Proposition \ref{prop:2} can be taken as a smooth manifold.
\end{enumerate}
\end{enumerate}  
\end{Prop}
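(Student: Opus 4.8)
The plan is to forget $M$ and to reconstruct a manifold $M_0$ directly from the pair $(W_f,\bar f)$, choosing every ``vertical'' piece of data entering the construction to be a product. Since Proposition~\ref{prop:3} does not ask that $M_0$ be diffeomorphic to $M$, this is permitted; the content of the statement is that such a choice is consistent and leaves the Reeb quotient unchanged.

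First I would record the local structure of $W_f$. Decompose $W_f$ into the regular neighborhoods $N(C)$ of the connected components $C$ of $q_f(S(f))$ and the closures of the connected components $R$ of $W_f\setminus\bigcup_C N(C)$. Over each $R$ the restriction of $q_f$ is a smooth $S^{m-n}$-bundle --- the fibre over an interior point of $W_f$ is a single sphere because $W_f$ separates the connected components of the preimages of $f$ --- and $R$ is an orientable $n$-manifold with boundary, $\bar f$ restricting on it to an immersion into $N$. Over an index-$0$ component $C$ the fibre of $N(C)\to C$ is a closed interval and ${q_f}^{-1}(N(C))\to C$ is a $D^{m-n+1}$-bundle carrying the standard index-$0$ fold germ along the zero section; over an index-$1$ component $C$ the fibre of $N(C)\to C$ is a Y-shaped graph and ${q_f}^{-1}(N(C))\to C$ is a bundle with fibre the standard pair-of-pants cobordism $P^{m-n+1}$ from $S^{m-n}\sqcup S^{m-n}$ to $S^{m-n}$, carrying the standard index-$1$ fold germ along the core. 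In either case $C$ is orientable, being the boundary of an immersed orientable sheet of $W_f$, and $N(C)$ and its preimage are trivial bundles over $C$ by the fact recalled before Proposition~\ref{prop:1}; each overlap $N(C)\cap\overline R$ is, over $C$, a product of $S^{m-n}$ with an arc.

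Next I would assemble $M_0$ and $f_0$. Replace, over each $R$, the bundle ${q_f}^{-1}(R)\to R$ by the product $R\times S^{m-n}$, arranged to restrict over $\partial R$ to the product model it has to match; over each index-$0$ (resp.\ index-$1$) component $C$ keep $C\times D^{m-n+1}$ (resp.\ $C\times P^{m-n+1}$) with the standard fold germs along the core; and glue these total spaces along the overlaps by the identity of the product $C\times(S^{m-n}\times\text{arc})$. This patches consistently because everything over each $N(C)$ is already a product over $C$ and the bundle over $R$ is chosen to restrict accordingly, so there are no clutching functions; the result is a closed, connected smooth $m$-manifold $M_0$ with an evident quotient $q_{f_0}\colon M_0\to W_f$, and one sets $f_0:=\bar f\circ q_{f_0}$. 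Reading off the installed germs, $f_0$ is a simple fold map with singular indices $0$ and $1$ and regular preimages disjoint unions of copies of $S^{m-n}$, with $q_{f_0} {\mid}_{S(f_0)}$ injective, $W_{f_0}=W_f$ and $\bar{f_0}=\bar f$. Properties (a) and (b) hold because the relevant bundles are products by construction. For (c): over the same decomposition glue the products $R\times D^{m-n+1}$ to the standard smooth thickenings of the $N(C)$'s --- which resolve the Y-shaped branch loci into an honest smooth manifold --- by the same identity identifications, round the corners, and obtain a compact connected orientable smooth manifold $W$ with $\partial W=M_0$ that collapses onto $W_f$ and through which $q_{f_0}$ factors as in Proposition~\ref{prop:2}; in particular $M_0$, bounding the orientable $W$, is orientable. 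Requiring only one or two of (a)--(c) amounts to trivializing only the relevant pieces, so the weaker statements are subsumed.

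The steps needing the most care --- and essentially the only ones with genuine content --- are: that the trivialized pieces patch to a global $M_0$ and $W$, ensured by the deformation retraction of each overlap onto its $C$ together with the fact that everything over the $N(C)$'s is already a product over $C$; that $W_{f_0}=W_f$ and $\bar{f_0}=\bar f$ hold on the nose, which follows since the fibres of $q_{f_0}$ are connected and $\bar f$ separates the components of the preimages of $f_0$, so a connected component of a fibre of $f_0$ is exactly a set $q_{f_0}^{-1}(w)$; and that $W$ is a genuine smooth manifold across the branch loci, handled by the standard smooth thickening of a trivalent-graph bundle over $C$ and corner rounding. The remaining verifications reduce to inspection of the fold normal forms, and the expected main obstacle is precisely the last one --- producing a single smooth $W$ out of the local branch-thickenings.
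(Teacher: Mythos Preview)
The paper gives no proof of Proposition~\ref{prop:3}; it is stated without argument or citation, presumably because the author regards it as a routine consequence of the local structure of simple fold maps recorded just before Proposition~\ref{prop:1} together with the construction in Proposition~\ref{prop:4}. There is therefore nothing to compare your attempt against directly.

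Your reconstruction strategy---discard $M$ and rebuild $M_0$ over $W_f$ from trivial pieces, then thicken to a smooth $W$---is the natural approach and is almost certainly what the author intends. Two small remarks. First, the sentence preceding Proposition~\ref{prop:1} already asserts that the bundle in (a) is trivial for \emph{every} simple fold map in the paper's sense, so (a) needs no work and the content lies in (b) and (c). Second, your orientability claims for $R$ and $C$ are not justified as written (you assume the sheets of $W_f$ are orientable, which is not automatic for general $N$), but they are also not needed: trivial bundles exist over any base, and the orientability of $M_0$ follows at the end from the fact that it bounds the orientable $W$, as you observe. The one place that would genuinely require care---gluing when the Y-bundle $N(C)\to C$ has nontrivial monodromy permuting the edges---is defused by the paper's standing hypothesis that ${q_f}^{-1}(N(C))\to C$ is trivial, which forces $N(C)\to C$ to be trivial as well. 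With these adjustments your argument is sound.
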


\begin{Prop}[\cite{saeki} and so on]
\label{prop:4}
\begin{enumerate}
\item The Reeb space of a special generic map $f:M \rightarrow N$ from a closed and connected manifold $M$ of dimension $m$ into an $n$-dimensional manifold $N$ with no boundary satisfying $m>n$ is a smooth manifold of dimension $n$ immersed into $N$ via $\bar{f}:W_f \rightarrow N$.
\item For any smooth immersion ${\bar{f}}_N$ of a compact and connected manifold $\bar{N}$ of dimension $n>0$ into an $n$-dimensional manifold $N$ with no boundary and any integer $m>n$, there exists a closed and connected manifold $M$ of dimension $m$ and a special generic map $f:M \rightarrow N$ satisfying $W_f=\bar{N}$ and $\bar{f}={\bar{f}}_N$. If $N$ is orientable, then $M$ can be taken as an orientable manifold.
\end{enumerate}
\end{Prop}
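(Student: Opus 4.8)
Both parts are classical, essentially due to Saeki \cite{saeki}, and both come out of the index-$0$ normal form of fold singularities; the plan is simply to reconstruct that local picture and globalize it.

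For (1) I would argue locally on $N$. Since $M$ is closed, $f$ is proper and $f(S(f))$ is closed; over a small ball $B\subset N\setminus f(S(f))$ the map $f$ is a proper submersion, so by Ehresmann's theorem $f^{-1}(B)\cong B\times F$ with $F$ a closed $(m-n)$-manifold, whence $q_f(f^{-1}(B))\cong B\times{\pi}_0(F)$ and $\bar f$ carries each sheet diffeomorphically onto $B$; thus $W_f$ is locally ${\mathbb R}^n$ there and $\bar f$ is a local diffeomorphism. Near a singular point $p$, index $0$ provides coordinates in which $f$ is
\[
(x_1,\dots,x_m)\mapsto\Bigl(x_1,\dots,x_{n-1},\textstyle\sum_{k=n}^{m}{x_k}^2\Bigr);
\]
the connected components of the local preimages are parametrized by $(x_1,\dots,x_{n-1})$ together with the value $t=\sum_{k=n}^{m}{x_k}^2\ge 0$, so the local Reeb space is ${\mathbb R}^{n-1}\times[0,\infty)$ and $\bar f$ is the inclusion of this half-space into ${\mathbb R}^n$. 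Assembling these charts --- and checking that the transitions between the half-space charts and the Euclidean charts are $C^{\infty}$, which is the only delicate point and is handled as in \cite{saeki} --- shows that $W_f$ is a smooth $n$-manifold with $\partial W_f=q_f(S(f))$ and that $\bar f$ is an immersion.

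For (2), given the immersion ${\bar f}_N\colon\bar N\to N$, I would take the closed $m$-manifold
\[
M:=\partial\bigl(\bar N\times D^{m-n+1}\bigr)=\bigl(\bar N\times S^{m-n}\bigr)\cup_{\partial\bar N\times S^{m-n}}\bigl(\partial\bar N\times D^{m-n+1}\bigr),
\]
which is connected since $\bar N$, $S^{m-n}$ and $D^{m-n+1}$ are connected (here $m-n\ge 1$). Fixing a collar $\partial\bar N\times[0,1]\hookrightarrow\bar N$ with $\partial\bar N\times\{0\}=\partial\bar N$, define $f\colon M\to N$ to be ${\bar f}_N\circ\mathrm{pr}_{\bar N}$ on $\bar N\times S^{m-n}$ and, on $\partial\bar N\times D^{m-n+1}$, to be $(x,v)\mapsto{\bar f}_N(x,1-|v|^2)$ read in the collar coordinates; choosing the collar so that near $\partial\bar N\times S^{m-n}$ it is identified with the radial coordinate of $D^{m-n+1}$ makes $f$ well defined and smooth across the gluing. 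In the model $(x,v)\mapsto(x,1-|v|^2)$ the singular set is exactly $\partial\bar N\times\{0\}$ and there the map is the index-$0$ fold normal form; composing with the local diffeomorphism ${\bar f}_N$ preserves this, so $f$ is a special generic map with $S(f)=\partial\bar N\times\{0\}$. For a point $q\in N$, the preimage $f^{-1}(q)$ is the disjoint union over the finite set ${\bar f}_N^{-1}(q)$ of copies of $S^{m-n}$ (over interior points of $\bar N$) and of single points (over boundary points), each of which is connected; hence $W_f=\bar N$ and $\bar f={\bar f}_N$. If $N$ is orientable, ${\bar f}_N$ pulls an orientation of $N$ back to $\bar N$, so $\bar N\times D^{m-n+1}$ is orientable and therefore so is its boundary $M$.

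The main obstacle in both parts is purely the smoothness bookkeeping: in (1), verifying $C^{\infty}$-compatibility of the half-space charts with the Euclidean ones around $q_f(S(f))$; in (2), arranging the collar of $\partial\bar N$ together with the radial function on $D^{m-n+1}$ so that the two formulas for $f$ patch to a genuinely smooth map realizing the fold normal form along $\partial\bar N\times\{0\}$. Both are routine and are carried out in \cite{saeki}; the conceptual content is exactly the local model of an index-$0$ fold described above.
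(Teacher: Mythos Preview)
The paper does not give its own proof of Proposition~\ref{prop:4}; it is stated as a known result and attributed to \cite{saeki}. Your sketch is correct and is precisely the classical argument from that reference: the index-$0$ normal form yields the half-space charts on $W_f$ for part~(1), and the double $M=\partial(\bar N\times D^{m-n+1})$ together with the projection folded over a collar of $\partial\bar N$ realizes part~(2), with orientability inherited when $N$ is orientable.
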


\section{Explicit special generic maps.}
\label{sec:3}
The following example explains a family of simplest explicit special generic maps.
\begin{Ex}
\label{ex:1}
Let $M$ be a closed manifold of dimension $m>1$ represented as a connected sum of all manifolds in the family $\{S^{k_j} \times S^{m-k_j}\}$ of finitely many manifolds satisfying $1 \leq k_j <n$ where $1<n<m$ holds. This admits a special generic map into ${\mathbb{R}}^n$ such that the restriction to the singular set is an embedding and that the image is represented as a boundary connected sum of all manifolds in the family $\{S^{k_j} \times D^{n-k_j}\}$. 
\end{Ex}

Proposition \ref{prop:5} later yields the following corollary.
\begin{Cor}
\label{cor:1}
Let $A$ be a PID. Let $M$ be a closed manifold of dimension $m>1$. If it admits a special generic map into ${\mathbb{R}}^n$ satisfying $m-n \geq n-1$, then the product of two cohomology classes $c_1 \in H^{i_1}(M;A)$ and $c_2 \in H^{i_2}(M;A)$ must vanish for $1 \leq i_1,i_2 \leq m-n$ with $n \leq i_1+i_2$.
\end{Cor}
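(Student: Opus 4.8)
The plan is to reduce the vanishing statement for $M$ to the corresponding statement for the Reeb space $W_f$ by invoking the isomorphism of truncated cohomology algebras, and then to exploit the extra collapsing enjoyed by the Reeb space of a special generic map. First I would apply Proposition \ref{prop:1}: since $f:M\to{\mathbb R}^n$ is special generic, the conclusion of that proposition holds with the improved range $0\le j\le m-n$, so the map ${q_f}^{\ast}$ induces an isomorphism of commutative algebras between the truncations ${\oplus}_{0\le j\le m-n} H^j(W_f;A)$ and ${\oplus}_{0\le j\le m-n} H^j(M;A)$. Thus for cohomology classes $c_1\in H^{i_1}(M;A)$ and $c_2\in H^{i_2}(M;A)$ with $1\le i_1,i_2\le m-n$ we may write $c_1={q_f}^{\ast}(\tilde c_1)$, $c_2={q_f}^{\ast}(\tilde c_2)$ with $\tilde c_\ell\in H^{i_\ell}(W_f;A)$, and $c_1\smile c_2={q_f}^{\ast}(\tilde c_1\smile\tilde c_2)$. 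Hence it suffices to prove $\tilde c_1\smile\tilde c_2=0$ in $H^{i_1+i_2}(W_f;A)$ whenever $n\le i_1+i_2$.

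Next I would use Proposition \ref{prop:2} (together with Proposition \ref{prop:4}(1)): because $f$ is special generic, $W_f$ is a compact $n$-dimensional manifold immersed in ${\mathbb R}^n$, and moreover $W_f$ collapses to an $(n-1)$-dimensional polyhedron $P$. A collapse is a simple-homotopy equivalence, so the inclusion-induced (equivalently, collapse-induced) map $H^{\ast}(P;A)\to H^{\ast}(W_f;A)$ is an isomorphism; in particular $H^j(W_f;A)=0$ for $j\ge n$, since $P$ has dimension at most $n-1$. Now if $n\le i_1+i_2$ then $\tilde c_1\smile\tilde c_2$ lies in $H^{i_1+i_2}(W_f;A)$ with $i_1+i_2\ge n$, and this group is zero; therefore $\tilde c_1\smile\tilde c_2=0$, and pulling back, $c_1\smile c_2=0$ in $H^{i_1+i_2}(M;A)$. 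The hypothesis $m-n\ge n-1$ is exactly what guarantees that the degrees $i_1,i_2$ allowed in the statement (namely $1\le i_\ell\le m-n$) do fall within the range $0\le j\le m-n$ in which Proposition \ref{prop:1} gives the algebra isomorphism, so there is no mismatch between the two ranges.

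The only genuinely delicate point is the interaction of the two ranges, and it is worth spelling out once. The cup product $c_1\smile c_2$ sits in degree $i_1+i_2$, which can be as large as $2(m-n)$, and a priori this exceeds $m-n$, so the truncated-algebra isomorphism of Proposition \ref{prop:1} does not directly let us identify $c_1\smile c_2$ with ${q_f}^{\ast}$ of something. However, we do not need that: we only need ${q_f}^{\ast}$ to be an algebra map on the subalgebra generated by degrees $\le m-n$, which it is, so $c_1\smile c_2={q_f}^{\ast}(\tilde c_1)\smile{q_f}^{\ast}(\tilde c_2)={q_f}^{\ast}(\tilde c_1\smile\tilde c_2)$, and then the target class $\tilde c_1\smile\tilde c_2$ vanishes for dimensional reasons on $W_f$. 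Thus the argument works even though $i_1+i_2$ may lie outside the range where ${q_f}^{\ast}$ is known to be an isomorphism. I expect the main obstacle a careful reader will want checked is precisely this: that ${q_f}^{\ast}$ is multiplicative (not merely a module map) in the relevant degrees, which is the content of part (1) of Proposition \ref{prop:1}, and that a collapse induces a cohomology isomorphism, which is standard simple-homotopy theory. Everything else is bookkeeping with degrees.
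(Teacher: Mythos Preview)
Your argument is correct and is essentially the paper's own proof: the paper deduces the corollary from Proposition~\ref{prop:5}(2), whose proof for special generic maps amounts to exactly what you wrote---pull $c_1,c_2$ back to $W_f$ (equivalently to the bounding manifold $W$ of Proposition~\ref{prop:2}) via the ring homomorphism ${q_f}^{\ast}$, and observe that the cup product there lands in degree $\ge n$, hence is zero because $W_f$ collapses to an $(n-1)$-dimensional polyhedron. One minor correction: your explanation of the hypothesis $m-n\ge n-1$ is off, since $1\le i_\ell\le m-n$ lies in the range $0\le j\le m-n$ regardless; the argument goes through without that hypothesis, which is present only to ensure the stated range of pairs $(i_1,i_2)$ is non-vacuous (any $i_1,i_2\ge 1$ with $i_1+i_2=n$ then automatically satisfies $i_\ell\le n-1\le m-n$).
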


We can apply this by setting $(m,n)=(7,3),(7,4)$ to know that the product of two cohomology classes $c_1 \in H^{i_1}(M;A)$ and $c_2 \in H^{i_2}(M;A)$ must vanish for $1 \leq i_1,i_2 \leq m-n$ with $n \leq i_1+i_2$ for example posing an additional condition that the $7$-dimensional manifolds are simply-connected. In both cases, for $2 \leq i_1,i_2 \leq 3$ satisfying $4 \leq i_1+i_2$, we can see this.

Theorem \ref{thm:3} presents infinite families of manifolds admitting no special generic maps into ${\mathbb{R}}^n$: Corollary \ref{cor:1} was already shown in \cite{kitazawa5} essentially and applied there.
\section{The main theorems.} 
For a commutative ring or graded commutative algebra $A$ and its subset $S_A$, $<S_A>$ denotes the subalgebra generated by the set $S_A$. 
For a topological space, $\delta$ denotes the coboundary operator on the cochain complex and $\cup$ denotes the cup product.  
For a cocycle $c$, we denote $C_c$ the cohomology class represented by this.

For a graded commutative algebra $A$, we define the {\it $i$-th module} as the module consisting of all elements of degree $i \geq 0$ of $A$. For an integer $i \geq 0$, we set the {\it $\leq i$-algebra} $A_{\leq i}$ of $A$ as the following graded commutative algebra.
\begin{enumerate}
\item As a graded commutative module, $A_{\leq i}$ is a module obtained by replacing the $j$-th module of $A$ by the trivial module for $j>i$ without changing the $j$-th module for $j \leq i$.   
\item As a graded commutative algebra, the product of a pair of elements in $A_{\leq i}$ is same as that in $A$ for two elements whose degrees are $j_1 \geq 0$ and $j_2 \geq 0$, respectively, unless $j_1+j_2 > i$.     
\end{enumerate}   

Via this notion, we can give a simpler explaination of isomorphisms of commutative algebras of Proposition \ref{prop:1} for example.

Let $A$ be a PID. For a topological space $X$ and three cohomology classes $C_{c_i} \in H^{\ast}(X;A)$ (i=1,2,3) representing a cocycle $c_i$ such that the cup product of $C_{c_1}$ and $C_{c_2}$ and that of $C_{c_2}$ and $C_{c_3}$ vanish, we can define
 an element represented by a cocycle of the form $u_1 {c_3}^{\prime}+(-1)^{{\rm deg} {c_1}^{\prime}+1}{c_1}^{\prime} u_2$ with ${\delta}(u_1)={c_1}^{\prime} \cup {c_2}^{\prime} \in C_{c_1} \cup C_{c_2}=0 \in H^{\ast}(X;A)$, ${\delta}(u_2)={c_2}^{\prime} \cup {c_3}^{\prime} \in C_{c_2} \cup C_{c_3}=0 \in H^{\ast}(X;A)$ and ${c_j}^{\prime} \in C_{c_j}$ of the quotient module $H^{({\Sigma}_{j=1}^3 {\rm deg} c_j)-1}(X;A)/<\{C_{c_1}{C_{c_2,c_3}}^{\prime}+{C_{c_1,c_2}}^{\prime}C_{c_3} \mid {C_{c_2,c_3}}^{\prime} \in H^{{\rm deg} c_2 + {\rm deg} c_3 - 1}(X;A), {C_{c_1,c_2}}^{\prime} \in H^{{\rm deg} c_1+ {\rm deg} c_2 - 1}(X;A)\}>$.  

\begin{Fact}
We can define the element in a unique way.
\end{Fact}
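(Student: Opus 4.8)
The plan is to verify the two standard claims that together constitute "being well-defined": first, that the cochain $u_1 {c_3}^{\prime}+(-1)^{{\rm deg}\, {c_1}^{\prime}+1}{c_1}^{\prime} u_2$ is always a cocycle, so that it does represent an element of $H^{({\Sigma}_{j=1}^3 {\rm deg}\, c_j)-1}(X;A)$; and second, that changing any of the auxiliary choices --- the cocycle representatives ${c_j}^{\prime}\in C_{c_j}$, and the bounding cochains $u_1,u_2$ with ${\delta}(u_1)={c_1}^{\prime}\cup {c_2}^{\prime}$ and ${\delta}(u_2)={c_2}^{\prime}\cup {c_3}^{\prime}$ --- changes the resulting cohomology class only by an element of the indeterminacy subgroup $<\{C_{c_1}{C_{c_2,c_3}}^{\prime}+{C_{c_1,c_2}}^{\prime}C_{c_3}\}>$ that we quotient by. Throughout I would use only the two defining identities of a cochain complex (${\delta}{\delta}=0$) and the Leibniz rule ${\delta}(a\cup b)={\delta}a\cup b+(-1)^{{\rm deg}\, a}a\cup {\delta}b$, so the argument is purely algebraic and takes place inside the singular (or cellular) cochain complex of $X$ with coefficients in the PID $A$.

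First I would compute ${\delta}$ of the candidate cochain. Since ${\delta}(u_1)={c_1}^{\prime}\cup{c_2}^{\prime}$ and ${c_3}^{\prime}$ is a cocycle, the Leibniz rule gives ${\delta}(u_1\cup{c_3}^{\prime})={c_1}^{\prime}\cup{c_2}^{\prime}\cup{c_3}^{\prime}$; similarly, since ${c_1}^{\prime}$ is a cocycle and ${\delta}(u_2)={c_2}^{\prime}\cup{c_3}^{\prime}$, we get ${\delta}({c_1}^{\prime}\cup u_2)=(-1)^{{\rm deg}\, {c_1}^{\prime}}{c_1}^{\prime}\cup{c_2}^{\prime}\cup{c_3}^{\prime}$. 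The sign $(-1)^{{\rm deg}\, {c_1}^{\prime}+1}$ attached to the second term is chosen precisely so these two contributions cancel, hence the candidate is a cocycle. Next, for the independence of choices, I would run the usual "difference" argument: if ${c_j}^{\prime}$ and ${c_j}^{\prime\prime}$ are two representatives of $C_{c_j}$ then ${c_j}^{\prime\prime}-{c_j}^{\prime}={\delta}(e_j)$ for some cochain $e_j$; if $u_1,u_1^{\prime}$ both bound ${c_1}^{\prime}\cup{c_2}^{\prime}$ then $u_1^{\prime}-u_1$ is a cocycle, representing some class in $H^{{\rm deg}\, c_1+{\rm deg}\, c_2-1}(X;A)$, and likewise for $u_2$. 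Substituting and expanding with the Leibniz rule, the change in $u_1\cup{c_3}^{\prime}+(-1)^{{\rm deg}\, {c_1}^{\prime}+1}{c_1}^{\prime}\cup u_2$ reduces, after discarding coboundaries, to a sum of terms of the shape $C_{c_1}\cup\alpha+\beta\cup C_{c_3}$ with $\alpha\in H^{{\rm deg}\, c_2+{\rm deg}\, c_3-1}(X;A)$ and $\beta\in H^{{\rm deg}\, c_1+{\rm deg}\, c_2-1}(X;A)$ --- exactly the generators of the subalgebra we quotient by. Hence the class in the quotient module is independent of all choices.

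The step I expect to be the only real bookkeeping obstacle is keeping the signs straight: when one replaces a representative ${c_j}^{\prime}$ by ${c_j}^{\prime}+{\delta}(e_j)$, one must also adjust the bounding cochains $u_1,u_2$ (for instance $u_1$ must be replaced by something bounding $({c_1}^{\prime}+{\delta}e_1)\cup({c_2}^{\prime}+{\delta}e_2)$), and the graded Leibniz signs propagate through several terms before the cancellations become visible. I would organize this by first treating the case where only $u_1,u_2$ vary (representatives fixed), where the difference is immediately $\pm(u_1^{\prime}-u_1)\cup{c_3}^{\prime}\pm{c_1}^{\prime}\cup(u_2^{\prime}-u_2)$ and hence lands in the indeterminacy, and then treating the variation of the ${c_j}^{\prime}$ separately, using the freedom to choose the new $u_i$ conveniently. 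Since $A$ is a PID there are no subtleties with the cohomology groups themselves; the Fact is in fact standard (see e.g. the treatment of matric/triple Massey products in Kraines or McCleary), so I would state it and give the sign-cancellation computation above as the proof, referring to the references \cite{dranishnikovrudyak}, \cite{dranishnikovrudyak2} already cited for the conventions.
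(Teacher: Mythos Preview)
Your argument is correct and is exactly the standard verification that the triple Massey product is well defined: the cocycle check via the Leibniz rule, followed by the difference argument showing that varying $u_1,u_2$ and the representatives ${c_j}^{\prime}$ changes the class only by elements of the form $C_{c_1}\cup\alpha+\beta\cup C_{c_3}$.

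However, the paper does not actually prove this Fact at all. It merely states it, and immediately afterwards writes ``For this and related topics, see \cite{kraines}, \cite{masseyuehara}, \cite{taylor} and \cite{taylor2} for example. We will apply fundamental propositions and theorems on triple Massey products which are explained precisely there without precise explanations in this section.'' In other words, the paper treats the well-definedness as a known result from the literature and defers entirely to those references. Your proposal therefore supplies strictly more than the paper does; what you have written is essentially the argument one finds in Kraines or Taylor, which is what the paper is implicitly invoking. One small mismatch: at the end you cite \cite{dranishnikovrudyak}, \cite{dranishnikovrudyak2} for the conventions, but those papers concern the non-formal manifolds used later as examples; the relevant references here are \cite{kraines}, \cite{masseyuehara}, \cite{taylor}, \cite{taylor2}.
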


We call the element the {\it triple Massey product} of the triplet of the three cohomology classes and denote it by $<C_{c_1},C_{c_2},C_{c_3}> \in H^{({\Sigma}_{j=1}^3 {\rm deg} c_j)-1}(X;A)/<\{C_{c_1}{C_{c_2,c_3}}^{\prime}+{C_{c_1,c_2}}^{\prime}C_{c_3} \mid {C_{c_2,c_3}}^{\prime} \in H^{{\rm deg} c_2 + {\rm deg} c_3 - 1}(X;A), {C_{c_1,c_2}}^{\prime} \in H^{{\rm deg} c_1+ {\rm deg} c_2 - 1}(X;A)\}>$. 
If it is zero, then we say that the triple Massey product {\it vanishes}.
For this and related topics, see \cite{kraines}, \cite{masseyuehara}, \cite{taylor} and \cite{taylor2} for example. We will apply
 fundamental propositions and theorems on triple Massey products which are explained precisely there without precise explanations in this section.

\begin{Prop}
\label{prop:5}
\begin{enumerate}
\item In the situation of Proposition \ref{prop:1}, if $W_f$ collapses to an $i$-dimensional polyhedron satisfying $0 \leq i \leq n$, then we can replace"$m-n-1$" by "$m-i-1$".
\item In the situation of Proposition \ref{prop:1} where $A$ is a PID, for two cohomology classes $C_{c_1} \in H^{j_1}(M;A)$ and $C_{c_2} \in H^{j_2}(M;A)$ satisfying $0 \leq j_1,j_2 \leq m-n-1$ and  $j_1+j_2 \geq n+1$, the product is zero. Furthermore, if the map $f$ is special generic, then we can replace the inequalities by $0 \leq j_1,j_2 \leq m-n$ and $j_1+j_2 \geq n$. More generally, if $W_f$ collapses to an $i$-dimensional polyhedron satisfying $0 \leq i \leq n$, then we can replace the inequalities by $0 \leq j_1,j_2 \leq m-i-1$ and $j_1+j_2 \geq i+1$.
\item In the situation of Proposition \ref{prop:1} where $A$ is a PID, for any triplet $(C_{c_1},C_{c_2},C_{c_3}) \in H^{j_1}(W_f;A) \times H^{j_2}(W_f;A) \times H^{j_3}(W_f;A)$ of cohomology classes the triple Massey product 
$<C_{c_1},C_{c_2},C_{c_3}>$ of which we can define, we can define the triple Massey product $<{q_f}^{\ast}(C_{c_1}),{q_f}^{\ast}(C_{c_2}),{q_f}^{\ast}(C_{c_3})>$ of the triplet $({q_f}^{\ast}(C_{c_1}),{q_f}^{\ast}(C_{c_2}),{q_f}^{\ast}(C_{c_3})) \in H^{j_1}(M;A) \times H^{j_2}(M;A) \times H^{j_3}(M;A)$ with $0 \leq j_1,j_2,j_3 \leq m-n-1$. If the former element $<C_{c_1},C_{c_2},C_{c_3}>$ vanishes {\rm (}does not vanish{\rm )} and the inequality $0 \leq j_1+j_2+j_3-1 \leq m-n-1$ holds, then the latter element $<{q_f}^{\ast}(C_{c_1}),{q_f}^{\ast}(C_{c_2}),{q_f}^{\ast}(C_{c_3})>$ vanishes {\rm (}resp. does not vanish{\rm )}. Furthermore, if the map $f$ is special generic, then we can replace "$m-n-1$" by "$m-n$". More generally, if $W_f$ collapses to an $i$-dimensional polyhedron satisfying $0 \leq i \leq n$, then we can replace "$m-n-1$" by "$m-i-1$".
\item In the situation of Proposition \ref{prop:1} where $A$ is a PID, for any triplet $(C_{c_1},C_{c_2},C_{c_3}) \in H^{j_1}(W_f;A) \times H^{j_2}(W_f;A) \times H^{j_3}(W_f;A)$ of cohomology classes the triple Massey product $<C_{c_1},C_{c_2},C_{c_3}>$ of which we can define, we can define the triple Massey product $<{q_f}^{\ast}(C_{c_1}),{q_f}^{\ast}(C_{c_2}),{q_f}^{\ast}(C_{c_3})>$ of the triplet $({q_f}^{\ast}(C_{c_1}),{q_f}^{\ast}(C_{c_2}),_{q_f}^{\ast}(C_{c_3})) \in H^{j_1}(M;A) \times H^{j_2}(M;A) \times H^{j_3}(M;A)$ with $0 \leq j_1,j_2,j_3 \leq m-n-1$ and $j_1+j_2+j_3 \geq n+2$ and this vanishes. Furthermore, if the map $f$ is special generic, then we can replace "$m-n-1$" by "$m-n$" and "$n+2$" by "$n+1$". More generally, if $W_f$ collapses to an $i$-dimensional polyhedron satisfying $0 \leq i \leq n$, then we can replace "$m-n-1$" by "$m-i-1$" and "$n+2$" by "$i+2$".
\end{enumerate}
\end{Prop}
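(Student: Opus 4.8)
The plan is to derive all four parts from the structural results already established, namely Proposition \ref{prop:1} (the isomorphism $q_f^\ast$ on low-degree cohomology and on the truncated algebra), Proposition \ref{prop:2} (the manifold $W$ collapsing to $W_f$ with $q_f = r \circ i$, where $i\colon M \hookrightarrow W$ is the inclusion and $r\colon W \to W_f$ realizes the collapse), and the dimension bound $\dim W_f = n$. The unifying idea is that cohomology of $W_f$ in degrees exceeding its geometric dimension vanishes, and $q_f^\ast$ faithfully transports products and Massey products from $W_f$ to $M$ up to the truncation degree.

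For part (1), I would first observe that when $W_f$ collapses to an $i$-dimensional polyhedron $P$, the inclusion $P \hookrightarrow W_f$ is a homotopy equivalence, so $H^j(W_f;A) = 0$ and $\pi_j(W_f)$ is unaffected beyond $j=i$; more precisely $W_f$ has the homotopy type of an $i$-complex. The argument of Proposition \ref{prop:1} uses the relative handle/cell structure of $M$ over $W_f$ and the fact that preimages are spheres $S^{m-n}$; re-running it with $W_f$ replaced by an $i$-dimensional model shows the attaching data that could obstruct $q_f^\ast$ being an isomorphism only enters in degrees $\ge (m-i-1)+1$, so $q_f^\ast$ is an isomorphism for $0 \le j \le m-i-1$. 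The special generic case ($i = n-1$ by Proposition \ref{prop:2}, and one gains one extra degree because a special generic map has only index-$0$ folds) is the special case recovering the stated ``$m-n$''.

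For part (2): given $C_{c_1} \in H^{j_1}(M;A)$, $C_{c_2} \in H^{j_2}(M;A)$ with $0 \le j_1,j_2 \le m-i-1$ and $j_1+j_2 \ge i+1$, use part (1) to pull these back from unique classes $D_1 \in H^{j_1}(W_f;A)$, $D_2 \in H^{j_2}(W_f;A)$ with $q_f^\ast D_k = C_{c_k}$. Since $W_f$ is homotopy equivalent to an $i$-complex, $H^{j_1+j_2}(W_f;A) = 0$, hence $D_1 \cup D_2 = 0$, and applying the ring homomorphism $q_f^\ast$ gives $C_{c_1} \cup C_{c_2} = q_f^\ast(D_1 \cup D_2) = 0$. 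The special generic refinement again follows from $\dim W_f = n-1$ in that case. Here the only subtlety is that $j_1+j_2$ may exceed $m-i-1$, so the product is computed upstairs on $W_f$, where it lives in a vanishing group, before being pushed down — exactly the mechanism by which the truncated-algebra isomorphism of Proposition \ref{prop:1}(1) is deployed.

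For parts (3) and (4): the defining data of a triple Massey product $\langle D_1,D_2,D_3\rangle$ on $W_f$ — cocycle representatives ${c_k}'$, and primitives $u_1,u_2$ with $\delta u_1 = {c_1}'\cup{c_2}'$, $\delta u_2 = {c_2}'\cup{c_3}'$ — can be pulled back along the chain map $q_f^\sharp$ to give defining data for $\langle q_f^\ast D_1, q_f^\ast D_2, q_f^\ast D_3\rangle$ on $M$ (the required cup products vanish on $M$ by part (2), so the Massey product downstairs is defined); thus $q_f^\ast \langle D_1,D_2,D_3\rangle \subseteq \langle q_f^\ast D_1,q_f^\ast D_2,q_f^\ast D_3\rangle$ as cosets, compatibly with the indeterminacy subgroups since $q_f^\ast$ is a ring map. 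When $0 \le j_1+j_2+j_3-1 \le m-i-1$, part (1) makes $q_f^\ast$ an isomorphism in that degree and carries the indeterminacy of the $W_f$-product onto the indeterminacy of the $M$-product, so the two Massey products vanish or not together — this is part (3). For part (4), when $j_1+j_2+j_3 \ge i+2$, the ambient degree $j_1+j_2+j_3-1 \ge i+1$ exceeds $\dim W_f$, so $\langle D_1,D_2,D_3\rangle$ lives in a quotient of the zero group $H^{j_1+j_2+j_3-1}(W_f;A)=0$ and hence vanishes; by the naturality just described its image $\langle q_f^\ast D_1,q_f^\ast D_2,q_f^\ast D_3\rangle$ also contains a representing cocycle that is a coboundary, so it vanishes. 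The special generic and general-$i$ refinements are the corresponding dimension shifts. The main obstacle is the bookkeeping in part (3): one must verify that $q_f^\sharp$ sends a complete set of defining data to a complete set, and that the indeterminacy subgroup $\langle\{D_1 E' + E'' D_3\}\rangle$ maps \emph{onto} the analogous subgroup downstairs (not merely into it), which is where the isomorphism of part (1) in degrees $\le m-i-1$, applied to the relevant factors $H^{j_2+j_3-1}$ and $H^{j_1+j_2-1}$, is essential — and one must check those degrees indeed fall within the isomorphism range under the stated hypotheses.
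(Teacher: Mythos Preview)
Your proposal is correct and follows essentially the same route as the paper: both exploit the factorization of $q_f$ through the bounding manifold $W$ of Proposition~\ref{prop:2}, the fact that $W$ (hence $W_f$) has the homotopy type of an $i$-dimensional polyhedron, and the naturality of cup products and triple Massey products under the induced cohomology maps. The paper makes the handle-index bound for $W$ relative to $M$ (indices $\ge m+1-i$) the explicit engine for part~(1), whereas you phrase this as ``re-running Proposition~\ref{prop:1}'s argument''; these amount to the same thing.

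One small misstep worth flagging: in part~(3) you justify that the Massey product on $M$ is defined by invoking part~(2), but part~(2) requires $j_1+j_2 \ge i+1$, which is not assumed in part~(3). The correct (and simpler) reason is that the products $C_{c_1}\cup C_{c_2}$ and $C_{c_2}\cup C_{c_3}$ already vanish on $W_f$ by hypothesis, and $q_f^{\ast}$ is a ring homomorphism. This does not affect the validity of your argument. Your careful treatment of the indeterminacy in part~(3)---checking that $q_f^{\ast}$ carries the indeterminacy subgroup on $W_f$ \emph{onto} that on $M$ because the relevant degrees $j_1+j_2-1$ and $j_2+j_3-1$ lie in the isomorphism range---is more explicit than the paper's terse ``follows by Propositions~\ref{prop:1} and~\ref{prop:2} together with topological properties of the maps $i$ and $d$''.
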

\begin{proof}

In the situation of Proposition \ref{prop:2}, $W$ is obtained by attaching handles to $M \times \{1\} \subset M \times [0,1]$ whose indices are larger than or equal to $m-n+1$. We can replace "$m-n+1$" by "$m-n+2$" if $f$ is special generic and by "$m+1-i$" if $W_f$ collapses to an $i$-dimensional polyhedron satisfying $0 \leq i \leq n$. This is a key ingredient: see also \cite{kitazawa3} and \cite{saekisuzuoka} for example. The inclusion $i:M \rightarrow W$ induces an isomorphism $i^{\ast}:H^j(W;A) \rightarrow H^j(M;A)$ for $0 \leq j \leq m-n-1$: we can replace "$m-n-1$" by "$m-n$" if $f$ is special generic and by "$m+1-i$" if $W_f$ collapses to an $i$-dimensional polyhedron satisfying $0 \leq i \leq n$. 
We denote the map giving collapsing to $W_f$ or a lower dimensional polyhedron of $W$ in Proposition \ref{prop:2} by $d$. We immediately have the first fact. 
The second fact was shown in \cite{kitazawa5}. $C_{c_1} \cup C_{c_2}=i^{\ast}({i^{\ast}}^{-1}(C_{c_1}) \cup {i^{\ast}}^{-1}(C_{c_2}))=0$ by the simple homotopy type of $W_f$.
The third fact follows by Propositions \ref{prop:1} and \ref{prop:2} together with topological properties of the maps $i$ and $d$ immediately.
The fourth fact also follows by similar reasons and we will explain this. $<{i^{\ast}}^{-1}({q_f}^{\ast}(C_{c_1})),{i^{\ast}}^{-1}({q_f}^{\ast}(C_{c_2})),{i^{\ast}}^{-1}({q_f}^{\ast}(C_{c_3}))>=<d^{\ast}(C_{c_1}),d^{\ast}(C_{c_2}),d^{\ast}(C_{c_3})$ is defined and it vanishes. 
We discuss properties of related cocycles on an $i$-dimensional polyhedron the polyhedron $W_f$ collapsed to and consider the pullbacks to $W_f$ and the ($m+1$)-dimensional manifold $W$ and we can see that the vanishing class is obtained by constructing the zero cocycle. By a fundamental property of the triple Massey products, we can see that $<{q_f}^{\ast}(C_{c_1}),{q_f}^{\ast}(C_{c_2}),{q_f}^{\ast}(C_{c_3})>$ vanishes.
\end{proof}

Note also for example that by Proposition \ref{prop:5}, we can know the product of two cohomology classes of $c_1 \in H^{j_1}(M;A)$ and $c_2 \in H^{j_2}(M;A)$ except cases where $j_1 \geq m-i$, $j_2 \geq m-i$ or $m-i \leq j_1+j_2 \leq i$ holds via the Reeb space. As another related remark, \cite{kitazawa5} show Theorem \ref{thm:3} with concrete and complete descriptions of the (integral) cohomology rings of the manifolds of some classes via compact submanifolds with no boundaries representing corresponding homology classes of the manifolds and their Poincar\'e duals. 

\begin{Thm}
\label{thm:4}
Let $A$ be a PID.
For any integer $i \geq 5$, there exist infinitely many compact and simply-connected polyhedra $X_{i,k}$ of dimension $i$ whose homology modules are free and which have triplets of three integral cohomology classes of degree $2$ for which we can define the
 triple Massey products which do not vanish such that the following properties hold.
\begin{enumerate}
\item $H_i(X_{i,k};A)$ is not zero.
\item For distinct $k_1$ and $k_2$, the modules $H_{\ast}(X_{i,k_1};A)$ and $H_{\ast}(X_{i,k_2};A)$ are not isomorphic.
\item For any pair $c_1 \in H^{j_1}(X_{i,k};A)$ and $c_2 \in H^{j_2}(X_{i,k};A)$ of cohomology classes satisfying $2 \leq j_1, j_2 \leq i-1$, the cup product of $c_1$ and $c_2$ always vanishes.
\end{enumerate}

Take an arbitrary compact and simply-connected polyhedron $X_i$ of dimension $i$ having triplets of three integral cohomology classes of degree $2$ for which we can define the
 triple Massey products which do not vanish such that the following properties hold.
\begin{enumerate}
\item $H_i(X_{i};A)$ is not zero.
\item For any pair $c_1 \in H^{j_1}(X_{i};A)$ and $c_2 \in H^{j_2}(X_{i};A)$ of cohomology classes satisfying $2 \leq j_1, j_2 \leq i-1$, the cup product of $c_1$ and $c_2$ always vanishes.
\end{enumerate}

In this situation, for arbitrary integers $m$ and $n$ satisfying $m>n \geq 2i+1$ and $m-i-1 \geq i$, there exist a closed and simply-connected manifold $M_{i,m,n}$ having a triplet of three integral cohomology classes of degree $2$ for which we can define the
 triple Massey product which does not vanish and a special generic map $f_{i,m,n}:M_{i,m,n} \rightarrow {\mathbb{R}}^n$ satisfying the following properties.
\begin{enumerate}
\item The Reeb space $W_{f_{i,m,n}}$ collapses to $X_i$.
\item The restriction of the map $f_{i,m,n}$ to the singular set is an embedding.
\item The $\leq m-i-1$-algebra of $H^{\ast}(M_{i,m,n};A)$ is isomorphic to $H^{\ast}(X_i;A)$.
\item For any pair of cohomology classes $C_{c_1} \in H^{j_1}(M;A)$ and $C_{c_2} \in H^{j_2}(M;A)$ satisfying $0 \leq j_1,j_2 \leq m-i-1$ and  $j_1+j_2 \geq i+1$, the product is zero.
\item For any triplet $(C_{c_1},C_{c_2},C_{c_3}) \in H^{j_1}(W_f;A) \times H^{j_2}(W_f;A) \times H^{j_3}(W_f;A)$ of cohomology classes the triple Massey product $<C_{c_1},C_{c_2},C_{c_3}>$ of which we can define, we can define the triple Massey product $<{q_{f_{i,m,n}}}^{\ast}(C_{c_1}),{q_{f_{i,m,n}}}^{\ast}(C_{c_2}),{q_{f_{i,m,n}}}^{\ast}(C_{c_3})>$ of the triplet $({q_{f_{i,m,n}}}^{\ast}(C_{c_1}),{q_{f_{i,m,n}}}^{\ast}(C_{c_2}),{q_{f_{i,m,n}}}^{\ast}(C_{c_3})) \in H^{j_1}(M;A) \times H^{j_2}(M;A) \times H^{j_3}(M;A)$ with $0 \leq j_1,j_2,j_3 \leq m-i-1$ and $j_1+j_2+j_3 \geq i+2$ and this always vanishes.
\end{enumerate}
\end{Thm}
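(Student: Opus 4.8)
The plan is to separate the construction into two independent tasks: (i) producing the family of polyhedra $X_{i,k}$ with the prescribed homological and Massey-product properties, and (ii) realizing any such polyhedron $X_i$ as the base of a collapsing of the Reeb space of a special generic map $f_{i,m,n}$, then transporting the cohomological information from $X_i$ to $M_{i,m,n}$ via Proposition \ref{prop:5}.

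For task (i), I would start from the known examples of simply-connected spaces of dimension $\geq 5$ carrying nonvanishing triple Massey products of degree-$2$ classes with all intermediate cup products vanishing, as constructed in \cite{dranishnikovrudyak} and \cite{dranishnikovrudyak2} (these arise, e.g., from suitable mapping cones or from low-dimensional cell complexes built to kill the relevant cup products while keeping a Massey product alive). To get \emph{infinitely many} pairwise homologically distinct such spaces for each fixed $i \geq 5$, I would take wedge sums (or, to keep things connected and with $H_i \neq 0$, connected sums along top cells after thickening, or one-point unions followed by attaching an $i$-cell) of a fixed ``Massey core'' with varying numbers of extra spheres $S^{j}$ in a degree range $2 \leq j \leq i-1$ chosen so that no new cup products are created — wedging never creates cross cup products, so condition (3) is preserved, while the homology in the chosen degrees grows without bound, giving condition (2); condition (1) is arranged by ensuring one summand has nontrivial top homology. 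One must check that wedging does not destroy the definedness or nonvanishing of the triple Massey product, which holds because the inclusion of the Massey core into the wedge is a split injection on cohomology and is compatible with cup products and hence with the defining system of the Massey product.

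For task (ii), given $X_i$ with the stated properties and $m > n \geq 2i+1$, $m-i-1 \geq i$, I would first thicken $X_i$ to a compact $n$-manifold $\bar N$ with boundary that collapses to $X_i$: since $2i+1 \leq n$, a regular neighborhood of an embedding of $X_i$ into ${\mathbb{R}}^n$ is such a manifold, and it immerses (indeed embeds) into ${\mathbb{R}}^n$. Applying Proposition \ref{prop:4}(2) to this immersion and the integer $m$ yields a closed, connected, orientable $m$-manifold $M := M_{i,m,n}$ and a special generic map $f := f_{i,m,n}\colon M \to {\mathbb{R}}^n$ with $W_f = \bar N$ and $\bar f$ the given immersion; simple-connectivity of $M$ follows from simple-connectivity of $\bar N$ (hence of $X_i$) together with the fact that $q_f$ is a ${\pi}_1$-isomorphism (Proposition \ref{prop:1}, valid here since $m-n-1 \geq 0$), and the restriction $f\mid_{S(f)}$ is an embedding because $\bar f$ is. Since $W_f$ collapses to $X_i$ (property (1)), Proposition \ref{prop:5}(1) upgrades the isomorphism range to $0 \leq j \leq m-i-1$, giving property (3) (the $\leq m-i-1$-algebra of $H^{\ast}(M;A)$ is isomorphic, via ${q_f}^{\ast}$, to $H^{\ast}(X_i;A)$ in that range, and $H^{\ast}(X_i;A)$ is concentrated in degrees $\leq i \leq m-i-1$ so nothing is lost). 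Property (4) is exactly Proposition \ref{prop:5}(2) in the collapsing form with the polyhedron $X_i$, using $m-i-1 \geq i$ so that the cup product of classes in total degree $\geq i+1$ — which on $X_i$ vanishes by hypothesis (2) — also vanishes on $M$. Property (5) is Proposition \ref{prop:5}(4) in its collapsing form: for any defined triple Massey product on $W_f$ (equivalently, after the collapse, on $X_i$) of classes in degrees $j_1,j_2,j_3$ with $j_1+j_2+j_3 \geq i+2$, the pulled-back triple Massey product on $M$ is defined and vanishes. Finally, the \emph{existence} of a nonvanishing triple Massey product on $M$ itself comes from Proposition \ref{prop:5}(3): take the nonvanishing triple Massey product of three degree-$2$ classes on $X_i$ (pulled back to $W_f$), which lives in degree $3+3+2-1 = \ldots$ — more precisely in degree $2+2+2-1 = 5 \leq m-i-1$ (here I use $i \geq 5$, so $m-i-1 \geq i \geq 5$), so the inequality $0 \leq j_1+j_2+j_3-1 \leq m-i-1$ of Proposition \ref{prop:5}(3) holds and nonvanishing is preserved under ${q_f}^{\ast}$.

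The main obstacle I anticipate is the compatibility bookkeeping in task (i): one must exhibit, for each $i \geq 5$, an actual polyhedron whose \emph{only} nonzero cup products among classes in degrees $2,\dots,i-1$ are trivial yet which still supports a nonzero triple Massey product, and then verify that the chosen infinite family preserves all three numbered conditions simultaneously — in particular that enlarging the homology by wedging on spheres neither creates an unwanted cup product (clear for wedges) nor interferes with the indeterminacy quotient in which the Massey product lives (one checks the new cohomology classes cannot enter the defining system because the relevant products vanish for degree reasons). A secondary technical point is confirming that the simple-connectivity and freeness hypotheses propagate correctly through the thickening $X_i \rightsquigarrow \bar N$ and through Proposition \ref{prop:4}(2); this is routine since regular neighborhoods in high-codimension ($n \geq 2i+1 > 2i$) are homotopy equivalent to the polyhedron and inherit its fundamental group and homology, and Proposition \ref{prop:1} then transfers these to $M$.
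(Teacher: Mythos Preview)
Your proposal is correct and follows essentially the same route as the paper: build the family $X_{i,k}$ by wedging the Dranishnikov--Rudyak $5$-complex with spheres, take a regular neighborhood in ${\mathbb{R}}^n$ (using $n\geq 2i+1$), invoke Proposition~\ref{prop:4}(2) to produce the special generic map, and then read off properties (1)--(5) from the various clauses of Proposition~\ref{prop:5}. The paper's own proof compresses the last step into the single sentence ``Proposition~\ref{prop:5} completes the proof,'' whereas you spell out which clause yields which property and why the nonvanishing Massey product survives to $M$ via Proposition~\ref{prop:5}(3); this extra bookkeeping is accurate and the arguments match.
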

\begin{proof}
We can easily see that for any integer $i \geq 5$, there exist infinitely many compact and simply-connected polyhedra $X_{i,k}$ of dimension $i$ having triplets of three integral cohomology classes of degree $2$ for which we can define the
 triple Massey products which do not vanish satisfying the first three listed properties. For example, we take a $5$-dimensional compact and simply-connected polyhedron in \cite{dranishnikovrudyak} or \cite{dranishnikovrudyak2} or a bouquet of the polyhedron and an arbitrary finite number of copies of standard spheres whose dimensions are greater than $1$. We can embed this into ${\mathbb{R}}^n$ to obtain a regular neighborhood and by Proposition \ref{prop:4} we have a closed and simply-connected manifold $M_{i,m,n}$ and a special generic map $f_{i,m,n}:M_{i,m,n} \rightarrow {\mathbb{R}}^n$ such that the restriction to the singular set is an embedding and that the Reeb space $W_{f_{i,m,n}}$ is diffeomorphic to the regular neighborhood and collapses to the original polyhedron before. Proposition \ref{prop:5} completes the proof.
\end{proof}

\begin{Rem}
\label{rem:1}
A $5$-dimensional compact and simply-connected polyhedron in \cite{dranishnikovrudyak} or \cite{dranishnikovrudyak2} can be embedded into ${\mathbb{R}}^8$ in fact.  In this case, we can replace the condition $n \geq 2i+1=2\times 5+1=11$ by $n \geq 8$ in Theorem \ref{thm:4}. 
We can see that for the $5$-dimensional compact and simply-connected polyhedron $X_{5,0}$, we can see that $H_j(X_{5,0};A)$ vanishes for $j \neq 0,2,5$, that $H_2(X_{5,0};A)$ is free and of rank $3$, and that $H_j(X_{5,0};A)$ is isomorphic to $A$ for $j=0,5$.
\end{Rem}
\begin{Rem}
\label{rem:2}
In Theorem \ref{thm:4}, we can take an $i$-dimensional compact and simply-connected polyhedron as a bouquet of standard spheres whose dimensions are greater than $1$ instead of $X_i$, whose homology module and cohomology ring are isomorphic to those of the original polyhedron. We can obtain a closed and simply-connected manifold $M_{i,m,n,0}$ having no triplet of three integral cohomology classes of degree $2$ for which we can define the
 triple Massey product which does not vanish and a special generic map $f_{i,m,n,0}:M_{i,m,n,0} \rightarrow {\mathbb{R}}^n$ satisfying the five similar properties.
\end{Rem}

\begin{Thm}
\label{thm:5}
Let $A$ be a PID.
For any integer $i \geq 7$, there exist infinitely many compact and simply-connected polyhedra $X_{i,k}$ of
 dimension $i$ which have triplets of three integral cohomology classes of degree $2$ for which we can define the
 triple Massey products which do not vanish such that the following properties hold.
\begin{enumerate}
\item $H_i(X_{i,k};A)$ is not zero.
\item For distinct $k_1$ and $k_2$, the modules $H_{\ast}(X_{i,k_1};A)$ and $H_{\ast}(X_{i,k_2};A)$ are not isomorphic.
\item $X_i$ is represented as a bouquet of polyhedra which are PL, closed and connected manifolds.
\end{enumerate}

Take an arbitrary compact and simply-connected polyhedron $X_i$ of dimension $i$ having triplets of three integral cohomology classes of degree $2$ for which we can define the
 triple Massey products which do not vanish such that the following properties hold.
\begin{enumerate}
\item $H_i(X_{i};A)$ is not zero.
\item $X_i$ is represented as a bouquet of polyhedra which are PL, closed and simply-connected manifolds.
\end{enumerate}

In this situation, for arbitrary integers $m$ and $n$ satisfying $m>n \geq 2i$ and $m-i-1 \geq i$, there exist a closed and simply-connected manifold $M_{i,m,n}$ having a triplet of three integral cohomology classes of degree $2$ for which we can define the
 triple Massey product which does not vanish and a special generic map $f_{i,m,n}:M_{i,m,n} \rightarrow {\mathbb{R}}^n$ satisfying the five properties in Theorem \ref{thm:4}.
\end{Thm}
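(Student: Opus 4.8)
The strategy is to follow the proof of Theorem~\ref{thm:4} almost verbatim; the only genuinely new inputs are (a) a closed, simply-connected PL manifold of dimension $7$ carrying a non-vanishing triple Massey product of three degree-$2$ cohomology classes, and (b) the classical fact that a bouquet of closed PL manifolds of dimension at most $i$ admits a PL embedding into $\mathbb{R}^{2i}$, and it is precisely (b) that lets us replace the bound $n\ge 2i+1$ for a general $i$-dimensional polyhedron by $n\ge 2i$. First I would fix such a manifold $N_0$, with free homology, so that its three degree-$2$ classes and their Massey product remain defined and non-zero over every PID $A$; the dimension $7$ is forced here because closed simply-connected manifolds of dimension $\le 6$ are formal (a theorem of Miller), so all of their triple Massey products vanish. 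For the first assertion I would then take $X_{i,k}:=N_0\vee S^{i}\vee\bigl(\bigvee_{l=1}^{k}S^{2}\bigr)$, which is a bouquet of closed, simply-connected PL manifolds of dimension $\le i$; here $H_i(X_{i,k};A)$ contains $H_i(S^{i};A)\cong A\ne 0$, the rank of $H_2(X_{i,k};A)$ equals $k$ plus a fixed constant (so distinct $k$'s give non-isomorphic homology modules), and — since the reduced cohomology of a bouquet is a direct sum with all cross cup products zero — the groups $H^{3}$, $H^{5}$ and the indeterminacy subgroup relevant to $\langle a,b,c\rangle$ of $X_{i,k}$ coincide with those of $N_0$; hence the triple Massey product of the three degree-$2$ classes survives and is non-zero in $X_{i,k}$.

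For the second assertion, let $X_i=N_1\vee\cdots\vee N_r$ be a bouquet as in the hypothesis, so that each $\dim N_j\le i$. Using $n\ge 2i$ I would PL-embed $X_i$ into $\mathbb{R}^{n}$: each $N_j$ embeds in $\mathbb{R}^{2\dim N_j}\subseteq\mathbb{R}^{2i}\subseteq\mathbb{R}^{n}$ by the Whitney embedding bound (valid in the PL category), and since every chosen wedge point has positive codimension $2i-\dim N_j\ge i>0$ the $r$ copies can be shrunk and positioned to meet only at a single common point. I would take a smooth regular neighbourhood $\bar N\subset\mathbb{R}^{n}$ of the resulting embedded $X_i$: this is a compact, connected, orientable $n$-manifold with boundary which collapses onto $X_i$. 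Proposition~\ref{prop:4}(2), applied to the inclusion $\bar N\hookrightarrow\mathbb{R}^{n}$ and the integer $m$, then yields a closed, connected, orientable $m$-manifold $M_{i,m,n}$ and a special generic map $f_{i,m,n}\colon M_{i,m,n}\to\mathbb{R}^{n}$ with $W_{f_{i,m,n}}=\bar N$ and $\bar f_{i,m,n}$ equal to the inclusion. Since $q_{f_{i,m,n}}$ restricts to a diffeomorphism of $S(f_{i,m,n})$ onto $\partial\bar N$ and $\bar f_{i,m,n}$ is an embedding, property~(2) holds; property~(1) holds by construction; and $M_{i,m,n}$ is simply-connected by Proposition~\ref{prop:1} together with Proposition~\ref{prop:5}(1) (which pushes the range to $m-i-1\ge i\ge 1$) and $\pi_1(\bar N)=\pi_1(X_i)=0$.

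The remaining properties are read off from Proposition~\ref{prop:5}, exactly as in Theorem~\ref{thm:4}. Because $W_{f_{i,m,n}}=\bar N$ collapses onto the $i$-dimensional polyhedron $X_i$, Proposition~\ref{prop:5}(1) and Proposition~\ref{prop:1}(1) make $q_{f_{i,m,n}}^{\ast}$ an isomorphism of $\le(m-i-1)$-algebras from $H^{\ast}(W_{f_{i,m,n}};A)\cong H^{\ast}(X_i;A)$ onto $H^{\ast}(M_{i,m,n};A)$; as $H^{\ast}(X_i;A)$ is concentrated in degrees $\le i\le m-i-1$ this is property~(3). Property~(4) is the collapse version of Proposition~\ref{prop:5}(2); this is also why the cup-product hypothesis imposed on $X_i$ in Theorem~\ref{thm:4} can be dropped, since in a bouquet of closed manifolds of dimension $\le i$ every non-zero cup product of positive-degree classes already lies in total degree $\le i$, so properties~(3) and~(4) never clash. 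Property~(5) is the collapse version of Proposition~\ref{prop:5}(4); and the non-vanishing triple Massey product of degree-$2$ classes, given on $X_i$ by hypothesis (equivalently, a Massey product on $W_{f_{i,m,n}}$), pulls back under $q_{f_{i,m,n}}^{\ast}$ to a non-vanishing one on $M_{i,m,n}$ by the collapse version of Proposition~\ref{prop:5}(3), using $0\le 2+2+2-1=5\le m-i-1$.

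The main obstacle is step (a): producing — or locating in the literature — a genuinely $7$-dimensional closed simply-connected PL manifold with a non-vanishing degree-$2$ triple Massey product over an arbitrary PID, i.e. securing the non-formality input at the minimal possible dimension (which is what makes the hypothesis $i\ge 7$, rather than $i\ge 5$, natural). Once such $N_0$ is in hand, the rest of the argument is a transcription of the proof of Theorem~\ref{thm:4} with ``$2i+1$'' replaced by ``$2i$'' and the bouquet of a Dranishnikov--Rudyak polyhedron with spheres replaced by a bouquet of closed manifolds.
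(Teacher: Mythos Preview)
Your proposal is correct and follows essentially the same route as the paper's proof, which simply cites the non-formal closed simply-connected manifolds of \cite{dranishnikovrudyak}, \cite{dranishnikovrudyak2}, \cite{fernandezMunoz} (resolving your obstacle~(a)), wedges them with spheres, and then says ``we can show the present theorem as Theorem~\ref{thm:4}.'' Your write-up is in fact more complete than the paper's: you make explicit the reason the bound improves from $n\ge 2i+1$ to $n\ge 2i$ (Whitney embedding of each closed manifold summand into $\mathbb{R}^{2i}$) and why the cup-product hypothesis on $X_i$ from Theorem~\ref{thm:4} can be dropped, both of which the paper leaves implicit.
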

\begin{proof}

We can easily see that for any integer $i \geq 7$, there exist infinitely many compact and simply-connected polyhedra $X_{i,k}$ of dimension $i$ having triplets of three integral cohomology classes of degree $2$ for which we can define the
 triple Massey products which do not vanish satisfying the first three listed properties. For example, we take an $i$-dimensional closed and simply-connected
  manifold in \cite{dranishnikovrudyak}, \cite{dranishnikovrudyak2} or \cite{fernandezMunoz} or a bouquet of the manifold and an arbitrary finite number of copies of standard spheres whose dimensions are greater than $1$. We can show the present theorem as Theorem \ref{thm:4}. 
\end{proof}
As another main theorem, we immediately obtain the following theorem by applying Proposition \ref{prop:5} and so on.

\begin{Thm}
\label{thm:6}
Let $A$ be a PID.
A $7$-dimensional closed and simply-connected manifold $M$ having a triplet of three cohomology classes of degree $2$ for which we can define the
 triple Massey product which does not vanish under the assumption that the coefficient ring is $A$ never admits a special generic map $f:M \rightarrow {\mathbb{R}}^n$ for $n=1,2,3,4,5$.
\end{Thm}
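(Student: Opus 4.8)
The plan is to argue by contradiction and to transport everything to the Reeb space $W_f$, where the collapsing statements of Propositions~\ref{prop:2} and \ref{prop:4} force triple Massey products of degree-$2$ classes to be trivial, and then to carry the conclusion back to $M$ via Proposition~\ref{prop:5}(4). So suppose $M$ is a $7$-dimensional closed simply-connected manifold carrying a non-vanishing triple Massey product $<\alpha_1,\alpha_2,\alpha_3>$ of three cohomology classes of degree $2$ over $A$ (so that $\alpha_1\cup\alpha_2=\alpha_2\cup\alpha_3=0$), and suppose $f\colon M\to{\mathbb{R}}^n$ is special generic with $n\in\{1,2,3,4,5\}$. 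By Proposition~\ref{prop:4}, $W_f$ is a compact orientable $n$-manifold with boundary (it immerses in ${\mathbb{R}}^n$), and by Proposition~\ref{prop:2} it collapses to an $(n-1)$-dimensional polyhedron; hence $H^j(W_f;A)=0$ for $j\ge n$, and $W_f$ is simply-connected because $q_f$ induces an isomorphism on $\pi_1$ (Proposition~\ref{prop:1}, valid as $m-n\ge 1$). Since $n\le 5$ gives $m-n=7-n\ge 2$, the special generic case of Proposition~\ref{prop:1} also makes $q_f^{\ast}\colon H^2(W_f;A)\to H^2(M;A)$ an isomorphism, so I may choose $\beta_k\in H^2(W_f;A)$ with $q_f^{\ast}\beta_k=\alpha_k$.

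The heart of the proof is to see that these lifts still satisfy $\beta_1\cup\beta_2=\beta_2\cup\beta_3=0$ in $H^4(W_f;A)$, equivalently that $q_f^{\ast}$ is injective on $H^4$. For $n\le 4$ this is automatic: $W_f$ then collapses to a polyhedron of dimension at most $3$, so $H^4(W_f;A)=0$. The case $n=5$ is the expected main obstacle, since there $W_f$ only collapses to a $4$-dimensional polyhedron and $H^4(W_f;A)$ may be nonzero. In that case I would use that a special generic map is simple: $q_f$ carries the singular set $S(f)$ diffeomorphically onto $\partial W_f$ (each index-$0$ fold point is an isolated point of its preimage component), and the inverse is a partial section $s\colon\partial W_f\to M$ with $q_f\circ s$ equal to the boundary inclusion $\iota\colon\partial W_f\hookrightarrow W_f$, whence $s^{\ast}\circ q_f^{\ast}=\iota^{\ast}$ on $H^4$. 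Lefschetz duality for the compact orientable simply-connected $5$-manifold $W_f$ yields $H^4(W_f,\partial W_f;A)\cong H_1(W_f;A)=0$, so $\iota^{\ast}$ is injective on $H^4$, and therefore so is $q_f^{\ast}$. Applying injectivity to $q_f^{\ast}(\beta_1\cup\beta_2)=\alpha_1\cup\alpha_2=0$ and $q_f^{\ast}(\beta_2\cup\beta_3)=\alpha_2\cup\alpha_3=0$ gives the claim, so the triple Massey product $<\beta_1,\beta_2,\beta_3>$ is defined on $W_f$.

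Finally I would invoke Proposition~\ref{prop:5}(4) in its special generic form for the triplet $(\beta_1,\beta_2,\beta_3)$ with $j_1=j_2=j_3=2$: the hypotheses $0\le j_k\le m-n=7-n$ and $j_1+j_2+j_3=6\ge n+1$ both hold precisely because $n\le 5$, and the conclusion is that $<q_f^{\ast}\beta_1,q_f^{\ast}\beta_2,q_f^{\ast}\beta_3>=<\alpha_1,\alpha_2,\alpha_3>$ is defined and vanishes, contradicting the hypothesis. (The cases $n=1,2$ are in fact immediate on their own, since then $H^2(W_f;A)=0$ forces $H^2(M;A)=0$; and the bound $n\le 5$ in the theorem is exactly the inequality $m-n\ge 2$ that makes degree-$2$ cohomology of $M$ descend to $W_f$.) The only genuinely delicate step is the $n=5$ instance of the $H^4$-injectivity of $q_f^{\ast}$, which is why I isolate the diffeomorphism $q_f|_{S(f)}\colon S(f)\to\partial W_f$ and the partial section it provides.
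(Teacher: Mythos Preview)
Your argument is correct and follows the paper's intended route (reduce to $W_f$ and invoke Proposition~\ref{prop:5}(4)), but you make explicit a point the paper's one-line proof passes over in silence: to apply Proposition~\ref{prop:5}(4) one must first know that the Massey product of the lifts $\beta_k\in H^2(W_f;A)$ is \emph{defined}, i.e.\ that $\beta_1\cup\beta_2=\beta_2\cup\beta_3=0$ in $H^4(W_f;A)$. For $n\le 4$ this is vacuous because $W_f$ collapses to an $(n-1)$-dimensional polyhedron, but for $n=5$ it is not, and your Lefschetz-duality/section argument is a clean way to supply it: $H^4(W_f,\partial W_f;A)\cong H_1(W_f;A)=0$ forces $\iota^{\ast}$ to be injective on $H^4$, and the diffeomorphism $q_f|_{S(f)}\colon S(f)\to\partial W_f$ (valid for special generic maps) gives a section $s$ with $s^{\ast}q_f^{\ast}=\iota^{\ast}$, hence $q_f^{\ast}$ is injective on $H^4(W_f;A)$. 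After that, the numerical checks $j_k\le m-n$ and $j_1+j_2+j_3\ge n+1$ are exactly $2\le 7-n$ and $6\ge n+1$, both equivalent to $n\le 5$, so Proposition~\ref{prop:5}(4) applies and yields the contradiction. In short, your proof is the paper's proof with the $n=5$ step filled in; the added ingredient (the boundary section of $q_f$) is not stated separately in the paper but is standard for special generic maps.
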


We give some explanations on the polyhedron $X_{5,0}$ and a regular neighborhood $N(X_{5,0})$, which is a compact and simply-connected $8$-dimensional manifold obtained by embedding this into ${\mathbb{R}}^8$, in \cite{dranishnikovrudyak}, \cite{dranishnikovrudyak2} and Remark \ref{rem:1}. $\partial N(X_{5,0})$ is a $7$-dimensional closed and simply-connected manifold. Let $A$ be an arbitrary PID. We have the following exact sequence with some isomorphisms of modules

\ \\

$\begin{CD}
@>   >> H^j(N(X_{5,0});A) \cong \{0\}\\
@> >> H^j(\partial N(X_{5,0});A) \\ @> >> H^{j+1}(N(X_{5,0}),\partial N(X_{5,0});A) \cong H_{7-j}(N(X_{5,0}),A) \cong \{0\} @> >>
\end{CD}$

\ \\

for $j=3,4$, the following exact sequence with some isomorphisms of modules
 
\ \\

$$\begin{CD}
@> >>H^2(N(X_{5,0}),\partial N(X_{5,0});A) \cong H_{6}(N(X_{5,0}),A) \cong \{0\} \\
@> >> H^2(N(X_{5,0});A) \cong A \oplus A \oplus A\\
@> >> H^2(\partial N(X_{5,0});A) \\
@> >> H^3(N(X_{5,0}),\partial N(X_{5,0});A) \cong H_{5}(N(X_{5,0}),A) \cong A \\
@> >> H^3(N(X_{5,0});A) \cong \{0\} @> >>
\end{CD}$$

\ \\

\noindent and we have isomorphims $H^2(\partial N(X_{5,0});A) \cong H_2(\partial N(X_{5,0});A) \cong A \oplus A \oplus A \oplus A$ and $H^3(\partial N(X_{5,0});A) \cong H_3(\partial N(X_{5,0});A) \cong \{0\}$.
Together with Poincar\'e duality theorem and arguments in the original articles, we can know homology groups and cohomology rings of the manifold $\partial N(X_{5,0})$ completely.
We obtain explicit structures of homology and cohomology groups, which were not given explicitly in the original articles. Last, for any coefficient ring $A$ which is a PID, the cohomology ring of the manifold is isomorphic to that of  a manifold represented as a connected sum of four copies of $S^2 \times S^5$.

\begin{Thm}
\label{thm:7}
Let $A$ be a PID and $n=4,5$. Let $\{G_j\}_{j=0}^7$ be a sequence of free and finitely generated modules over $A$ such that $G_0=G_m=A$, that $G_j=G_{7-j}$ for $0 \leq j \leq 7$, that $G_1=G_6=\{0\}$ and that the rank of $G_2$ is greater than or equal to $4$.
In this situation, there exists a pair $(M_1,M_2)$ of $7$-dimensional closed and simply-connected manifolds satisfying the following properties.
\begin{enumerate}
\item $H_j(M_i;A) \cong G_j$ for $i=1,2$ and $0 \leq j \leq 7$.
\item The cohomology rings $H^{\ast}(M_1;A)$ and $H^{\ast}(M_2;A)$ are isomorphic. 
\item $M_1$ admits a special generic map into ${\mathbb{R}}^n$.
\item $M_2$ admits no special generic map into ${\mathbb{R}}^n$.
\item $M_2$ admits a fold map into ${\mathbb{R}}^n$  
\end{enumerate}
This holds for $n=3$ if we add the constraint $G_3=\{0\}$.
Moreover, 
\end{Thm}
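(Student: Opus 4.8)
The strategy is to realise $M_1$ as a connected sum of products of two spheres, so that Example~\ref{ex:1} supplies the special generic map of (3), and to obtain $M_2$ by replacing four of those summands by the closed $7$-manifold $\partial N(X_{5,0})$ analysed just before the statement, so that $M_2$ carries a non-vanishing triple Massey product of degree-$2$ classes; then Theorem~\ref{thm:6} forbids a special generic map on $M_2$, while $M_1$ and $M_2$ still have isomorphic cohomology rings and the same homology.

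Write $r:=\operatorname{rank}_A G_2\geq 4$ and $q:=\operatorname{rank}_A G_3$, and set
$$M_1:=\bigl(\#^{\,r}(S^2\times S^5)\bigr)\,\#\,\bigl(\#^{\,q}(S^3\times S^4)\bigr),\qquad M_2:=\partial N(X_{5,0})\,\#\,\bigl(\#^{\,r-4}(S^2\times S^5)\bigr)\,\#\,\bigl(\#^{\,q}(S^3\times S^4)\bigr),$$
the second being legitimate since $r\geq 4$; both are closed, orientable and simply-connected of dimension $7$. Computing the homology of a connected sum and using the groups of $\partial N(X_{5,0})$ recorded above ($H_2\cong A^{4}$, $H_3\cong\{0\}$, hence $H_4\cong\{0\}$ and $H_5\cong A^{4}$ by Poincar\'e duality) gives $H_j(M_i;A)\cong G_j$ for $i=1,2$ and $0\leq j\leq 7$ — the hypotheses $G_0=G_7=A$, $G_1=G_6=0$, $G_j=G_{7-j}$ and $G_j$ free are exactly what this requires — which is (1). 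Since the cohomology ring of a connected sum of closed simply-connected $m$-manifolds is built functorially from the cohomology rings of the summands together with their orientation classes, and the cohomology ring of $\partial N(X_{5,0})$ is isomorphic to that of $\#^4(S^2\times S^5)$ as recalled above, we get $H^\ast(M_1;A)\cong H^\ast(M_2;A)$ as graded rings, which is (2). Finally Example~\ref{ex:1} produces a special generic map $M_1\to\mathbb{R}^n$ as soon as every sphere factor has dimension $<n$, hence for $n=4,5$, and for $n=3$ precisely when there is no $S^3\times S^4$ summand, i.e.\ when $G_3=0$; this gives (3) and explains the extra hypothesis in the $n=3$ case.

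For (4) I would transport the triple Massey product of $X_{5,0}$ to $M_2$ in two steps. First, $N(X_{5,0})$ deformation retracts onto $X_{5,0}$, so it carries degree-$2$ classes $a,b,c$ with $a\cup b=b\cup c=0$ (as $H^4(X_{5,0})=0$) and $\langle a,b,c\rangle\neq 0$ in $H^5\cong A$ with vanishing indeterminacy (as $H^3(X_{5,0})=0$). Restricting along $\partial N(X_{5,0})\hookrightarrow N(X_{5,0})$: the restricted classes still have the two cup products zero, $H^3(\partial N(X_{5,0});A)=0$ kills the indeterminacy, and $H^5(N(X_{5,0});A)\to H^5(\partial N(X_{5,0});A)$ is injective because $H^5(N(X_{5,0}),\partial N(X_{5,0});A)\cong H_3(N(X_{5,0});A)=0$, so by naturality of triple Massey products the restricted triplet has a non-vanishing triple Massey product on $\partial N(X_{5,0})$. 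Second, pulling back along the collapse $M_2\to\partial N(X_{5,0})$: in a connected sum a product of two positive-degree classes carried by different summands vanishes below the top degree, so the indeterminacy in $M_2$ (cup products of one of these degree-$2$ classes with $H^3(M_2)$, which is carried by the $S^3\times S^4$ summands) is again zero, and the collapse is injective on $H^5$; hence $M_2$ carries a triplet of three degree-$2$ classes with non-vanishing triple Massey product over $A$. Theorem~\ref{thm:6} then yields (4): $M_2$ admits no special generic map into $\mathbb{R}^n$ for $n=1,\dots,5$, in particular for $n=3,4,5$.

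Property (5) is comparatively soft. A connected sum of fold maps into $\mathbb{R}^n$ is a fold map into $\mathbb{R}^n$ — perform the connected sum of the sources inside the preimage of a small $n$-disc contained in the common regular-value set — so it suffices to produce a fold map $\partial N(X_{5,0})\to\mathbb{R}^n$; and by the classical existence theory for fold maps (Eliashberg's $h$-principle) every closed orientable $7$-manifold admits a fold map into $\mathbb{R}^n$ for $n=3,4,5$: for $n=4$ there is no obstruction since $7-n=3$ is odd, and for $n=3,5$ the only obstruction is the Euler characteristic, which vanishes for an odd-dimensional closed manifold. Hence $M_2$ admits a fold map into $\mathbb{R}^n$ as well, and assembling (1)--(5) proves the theorem. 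The step I expect to be the real obstacle is (4): one must be certain that the higher-order linking responsible for the non-trivial triple Massey product of $X_{5,0}$ genuinely survives to the closed manifold $\partial N(X_{5,0})$ and then to $M_2$ — and here it is the exact sequences recorded before the statement together with naturality of Massey products, rather than the Reeb-space transfer of Proposition~\ref{prop:5} (which does not reach degree $5$ in this range), that does the work.
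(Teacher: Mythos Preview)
Your proof follows essentially the same strategy as the paper's: the same explicit choices $M_1=\#^{r}(S^2\times S^5)\,\#\,\#^{q}(S^3\times S^4)$ and $M_2=\partial N(X_{5,0})\,\#\,\#^{r-4}(S^2\times S^5)\,\#\,\#^{q}(S^3\times S^4)$, the same appeal to Example~\ref{ex:1} for (3), to Theorem~\ref{thm:6} for (4), and to Eliashberg for (5). You in fact supply more than the paper does, carrying out explicitly the naturality-and-exact-sequence argument that pushes the non-vanishing triple Massey product from $X_{5,0}$ down to $\partial N(X_{5,0})$ and then across the connected sum to $M_2$; the paper leaves this to the discussion preceding the theorem and to \cite{dranishnikovrudyak,dranishnikovrudyak2}.

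The one genuine difference is in (5). The paper observes that $M_2$ embeds in ${\mathbb{R}}^8$ (because $\partial N(X_{5,0})\subset{\mathbb{R}}^8$ by construction and the remaining sphere-product summands embed there as well) and then invokes Eliashberg; you instead appeal to Eliashberg directly via the parity/Euler-characteristic obstruction for closed orientable $7$-manifolds. Both reach the same conclusion, but the paper's route has the practical advantage that the codimension-one embedding in ${\mathbb{R}}^8$ immediately furnishes the formal tangential data Eliashberg's $h$-principle requires, so one does not need to recall the exact shape of the obstruction in each codimension.
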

\begin{proof}
We set $M_1$ as a suitable manifold admitting a special generic map in Example \ref{ex:1}.
We set $M_2$ as a manifold represented as a connected sum of $\partial N(X_{5,0})$, which is discussed before the present theorem, and a suitable manifold admitting a special generic map in Example \ref{ex:1}.
$M_2$ admits a fold map into ${\mathbb{R}}^n$ since we can embed $M_2$ in ${\mathbb{R}}^8$ together with classical theory of existence of fold maps on manifolds in \cite{eliashberg} and \cite{eliashberg2}. 
This together with Theorem \ref{thm:6} completes the proof.
\end{proof}

Last, we can also give a remark similar to Remark \ref{rem:2} on Theorem \ref{thm:5} by using the manifold $\partial N(X_{5,0})$ and we leave this to readers.
\section{Acknowledgement}
\thanks{The author is a member of the following project: JSPS KAKENHI Grant Number JP17H06128 "Innovative research of geometric topology and singularities of differentiable mappings"
(https://kaken.nii.ac.jp/en/grant/KAKENHI-PROJECT-17H06128/). This work is supported by

 "The Sasakawa Scientific Research Grant" (2020-2002 : https://www.jss.or.jp/ikusei/sasakawa/)..}

\end{document}